\documentclass[a4paper,11pt]{article}

\usepackage[utf8]{inputenc}
\usepackage{amsmath, amsthm, amsfonts, amssymb}
\usepackage{stmaryrd}
\usepackage{graphicx}
\usepackage{xcolor}
\usepackage{paralist}
\usepackage{soul}
\usepackage{framed}
\usepackage{setspace}
\usepackage{tikz-cd}
\usepackage{bbm}
\usepackage{algorithm}
\usepackage{algorithmic}
\usepackage{hyperref}

\renewcommand{\>}{\rangle}

\newcommand{\eps}{\varepsilon}
\renewcommand{\phi}{\varphi}

\newcommand{\N}{\mathbb{N}}
\newcommand{\Z}{\mathbb{Z}}
\newcommand{\R}{\mathbb{R}}
\newcommand{\C}{\mathbb{C}}
\newcommand{\II}{\mathbb{I}}
\newcommand{\E}{\mathbb{E}}

\newcommand{\co}{\colon}

\newcommand{\bbrackets}[1]{\!\left(\rule{0pt}{11pt} #1\right)}

\newtheorem{theorem}{Theorem}
\newtheorem{lemma}[theorem]{Lemma}
\newtheorem{coro}[theorem]{Corollary}
\newtheorem{prop}[theorem]{Proposition}
\theoremstyle{definition}
\newtheorem{remark}[theorem]{Remark}
\newtheorem{example}[theorem]{Example}

\DeclareMathOperator{\Span}{span}
\DeclareMathOperator{\Tr}{Tr}
\DeclareMathOperator*{\argmin}{arg\,min}

\title{
Constructive discretization and approximation in reproducing kernel Hilbert spaces
}
\author{Abdellah Chkifa, Matthieu Dolbeault, David Krieg, Mario Ullrich}

\begin{document}

\maketitle
\begin{abstract} 
We generalize the sparsification algorithm of
 Batson, Spielman and Srivastava, 
making one part of the result dimension-independent. 
In particular, we recover discretization inequalities in $L_2$- and sup-norms 
on general finite-dimensional subspaces, prove a suitable infinite-dimen\-sional variant, 
and discuss the implications for the error of least-squares approximation based on samples.
This gives a more constructive version of several recently established approximation bounds, some of which relied on the stronger and less constructive result of Marcus, Spielman and Srivastava. 
We also improve the constants and oversampling factors in these results.
\end{abstract}

\section{Introduction}
\label{sec:intro}
In recent years, much
progress has been made in norm discretization and sampling recovery, first
by considering concentration inequalities for random points,
then by applying a sample sparsification using \cite{MSS} or \cite{BSS}. 
This follows the trend observed in other fields, including graph sparsification, frame discretization, matrix
sketching, subspace selection, and more generally randomized numerical linear algebra.

Here, we prove a generalization of the result from~\cite{BSS} by Batson, Spielman, and Srivastava (BSS), and expand on its implications
for the problems of norm discretization and sampling recovery. 
The original result from~\cite{BSS} can be stated as follows:

\begin{prop}[\cite{BSS}, Theorem 3.1]
\label{thm:BSS}
Let $D$ be a finite set and let $\mu$ be the uniform distribution on $D$. Let $a=(a_1,\hdots,a_m)^\top$
be an orthonormal family in $L_2(D,\mu)$.
Then, for any $n>m$, there exist points $x_1,\dots,x_n\in D$ and weights ${w_1,\dots,w_n>0}$ such that
\[
\lambda_{\min}\left( \sum_{i=1}^n w_i\, a(x_i)a(x_i)^* \right) \geq
\left(1-\sqrt{\frac{m}{n}}\right)^2
\]
and
\[
\lambda_{\max}\left(\sum_{i=1}^n w_ia(x_i)a(x_i)^* \right) \leq \left(1+\sqrt{\frac{m}{n}}\right)^2.
\]
\end{prop}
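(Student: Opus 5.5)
We prove the statement with the barrier-function method of Batson, Spielman and Srivastava. Set $N=|D|$ and $v_x = N^{-1/2} a(x)\in\C^m$ for $x\in D$. Orthonormality of $a$ in $L_2(D,\mu)$ gives $\sum_{x\in D} v_xv_x^* = I_m$, and the sum $\sum_i w_i a(x_i)a(x_i)^*$ becomes $\sum_i \tilde w_i v_{x_i}v_{x_i}^*$ with $\tilde w_i = N w_i$. So it suffices to show the following: given vectors with $\sum_x v_xv_x^*=I$, there are $n$ of them (repetitions allowed) and positive weights such that $A=\sum_i \tilde w_i v_{x_i}v_{x_i}^*$ has all eigenvalues in $[(1-\sqrt{m/n})^2,(1+\sqrt{m/n})^2]$, after a global rescaling.

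We build $A_0=0, A_1,\dots,A_n$ by adding one rank-one term per step. We track two barriers $\ell_k<\lambda_{\min}(A_k)\le\lambda_{\max}(A_k)<u_k$ through the potentials
\[
\Phi^u(A)=\Tr(uI-A)^{-1},\qquad \Phi_\ell(A)=\Tr(A-\ell I)^{-1}.
\]
We start with $u_0=m/\varepsilon_U$ and $\ell_0=-m/\varepsilon_L$, so that $\Phi^{u_0}(0)=\varepsilon_U$ and $\Phi_{\ell_0}(0)=\varepsilon_L$. At each step both barriers move: $u_{k+1}=u_k+\delta_U$ and $\ell_{k+1}=\ell_k+\delta_L$. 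The required single-step statement is that some $x\in D$ and some $t>0$ satisfy the following: adding $tv_xv_x^*$ keeps the eigenvalues strictly between the shifted barriers, and neither potential increases.

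We verify this step by Sherman–Morrison. For the upper barrier, set $U_A(v)$ to be
\[
\frac{v^*(u'I-A)^{-2}v}{\Phi^{u}(A)-\Phi^{u'}(A)} + v^*(u'I-A)^{-1}v,
\]
where $u'=u+\delta_U$. Then the upper potential does not increase whenever $1/t\ge U_A(v)$. For the lower barrier, set $L_A(v)$ to be
\[
\frac{v^*(A-\ell' I)^{-2}v}{\Phi_{\ell'}(A)-\Phi_\ell(A)} - v^*(A-\ell' I)^{-1}v,
\]
where $\ell'=\ell+\delta_L$. Then the lower potential does not increase whenever $1/t\le L_A(v)$, and $\lambda_{\min}$ stays above $\ell'$. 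Both facts hold provided $\delta_L$ is small enough that $1/\delta_L\ge \Phi_\ell(A)$.

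The step is feasible if some $x$ satisfies $U_A(v_x)\le L_A(v_x)$. We get this by averaging. Since $\sum_x v_xv_x^*=I$, summing over $x$ replaces each quadratic form by a trace:
\[
\sum_x U_A(v_x)\le \frac1{\delta_U}+\varepsilon_U,\qquad
\sum_x L_A(v_x)\ge \frac1{\delta_L}-\frac{\varepsilon_L}{1-\delta_L\varepsilon_L}.
\]
The second bound uses convexity of $\ell\mapsto \Tr(A-\ell I)^{-2}$. Hence a suitable $x$ exists whenever
\[
\frac1{\delta_U}+\varepsilon_U\le\frac1{\delta_L}-\varepsilon_L,
\]
or rather the slightly stronger version of this inequality that accounts for the $\delta_L\varepsilon_L$ term. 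I expect the main technical obstacle to be these two averaging inequalities, especially the lower one.

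After $n$ steps the condition number of $A_n$ is at most
\[
\frac{u_n}{\ell_n}=\frac{m/\varepsilon_U+n\delta_U}{n\delta_L-m/\varepsilon_L}.
\]
With $d=n/m$, the classical choice is
\[
\delta_L=1,\quad \varepsilon_L=\frac{1}{\sqrt d},\quad \delta_U=\frac{\sqrt d+1}{\sqrt d-1},\quad \varepsilon_U=\frac{\sqrt d-1}{d+\sqrt d}.
\]
This satisfies the compatibility condition and gives
\[
\frac{u_n}{\ell_n}\le \frac{d+1+2\sqrt d}{d+1-2\sqrt d}=\frac{(1+\sqrt{m/n})^2}{(1-\sqrt{m/n})^2}.
\]
This holds for $n>m$, i.e. $d>1$. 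Finally, we rescale all the weights by a single positive constant so that the spectrum lies in $[(1-\sqrt{m/n})^2,(1+\sqrt{m/n})^2]$. One such constant is $(1-\sqrt{m/n})^2/\ell_n$, since $(1+\sqrt{m/n})^2/(1-\sqrt{m/n})^2 \ge u_n/\ell_n$.

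A point may be selected more than once. In that case we merge the repeated terms, which only reduces the count below $n$. We then pad back to exactly $n$ points by repeating a point and splitting its weight; this leaves $A$ unchanged and keeps all weights positive. We translate back to $w_i=\tilde w_i/N$.
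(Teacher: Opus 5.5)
Your architecture is the right one and matches the paper's route. The paper gets this proposition from Theorem~\ref{thm:main} with $b=a$ and $J=I$, whose proof is the BSS barrier argument with the barriers absorbed into $A$ and $B$ as increments. However, there is a genuine gap exactly at the step you flagged, the lower averaging inequality.

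The convexity argument only gives $\sum_x L_A(v_x)\ge \frac1{\delta_L}-\frac{\varepsilon_L}{1-\delta_L\varepsilon_L}$. With your parameters this does not reach the upper average. Indeed, $\frac1{\delta_U}+\varepsilon_U=1-\frac1{\sqrt d}$, whereas $\frac1{\delta_L}-\frac{\varepsilon_L}{1-\delta_L\varepsilon_L}=1-\frac1{\sqrt d-1}$ is strictly smaller, and even negative when $d<4$. So the classical parameters satisfy $\frac1{\delta_U}+\varepsilon_U\le\frac1{\delta_L}-\varepsilon_L$ (with equality), but not the ``slightly stronger version'' your own bound requires. The existence of an admissible $x$ at each step therefore does not follow.

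Retuning the parameters cannot repair this. Normalize $\delta_L=1$, $\varepsilon_L=e$. Minimizing $m/\varepsilon_U+n\delta_U$ under $\frac1{\delta_U}+\varepsilon_U\le c$ gives $(\sqrt m+\sqrt n)^2/c$. So the best ratio you can certify is $(\sqrt n+\sqrt m)^2/\big(\frac{1-2e}{1-e}(n-\frac me)\big)$. Since $\frac{1-2e}{1-e}<1-e$ and $\max_e(1-e)(n-\frac me)=(\sqrt n-\sqrt m)^2$, this is always strictly worse than $(1+\sqrt{m/n})^2/(1-\sqrt{m/n})^2$.

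What is missing is the sharp bound $\sum_x L_A(v_x)\ge \frac1{\delta_L}-\Phi_\ell(A)$. This is BSS, Lemma~3.6, and in the paper it is \eqref{eq:exp-LA} in Lemma~\ref{lemma:pos_measure}. Set $Y=(A-\ell I)^{-1}$ and $Z=(A-\ell' I)^{-1}$. Then $Z-Y=\delta_L YZ$ and $Z^2-YZ=\delta_L YZ^2$, which give
\[
\sum_x L_A(v_x)-\Big(\frac1{\delta_L}-\Tr Y\Big)=\frac{\Tr(YZ^2)}{\Tr(YZ)}-\delta_L\Tr(YZ).
\]
The right-hand side is nonnegative by the Cauchy--Schwarz inequality $\Tr(YZ)^2\le\Tr(Y)\Tr(YZ^2)$ together with $\delta_L\Tr(Y)\le 1$, which is your standing assumption $\delta_L\le 1/\Phi_\ell(A)$.

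Your version loses because it bounds $\Phi_{\ell'}(A)$ separately by $\varepsilon_L/(1-\delta_L\varepsilon_L)$, instead of comparing the whole expression with $\frac1{\delta_L}-\Phi_\ell(A)$. With the sharp bound, your classical parameters satisfy the compatibility condition with equality. The rest of your argument then goes through: the bound on $u_n/\ell_n$, the final rescaling, and the merging and padding of repeated points.
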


Here and in the following, we denote by 
$\lambda_{\min}(A)$ and $\lambda_{\max}(A)$ the smallest and largest eigenvalues of $A$.
The points and weights from Proposition~\ref{thm:BSS} can be constructed in polynomial time.

In \cite{BSS}, this theorem is applied to the problem of graph sparsification, see also \cite{HGN+24,SS11,ST11}.
For our applications, we want to generalize the result in three ways: by allowing general measure spaces $(D,\mu)$; by decoupling the lower and upper frame bounds with two different
families of $L_2$-functions; and by making the upper frame bound independent of the dimension, thus allowing to consider very large and even infinite-dimensional families. 

The first two kinds of generalizations are already available in the literature. General probability spaces are treated, for instance, in \cite[Theorem~6.4]{DPSTT}.
The second kind of generalization was achieved in \cite{BDM14}.
The result of \cite{BDM14}, called dual set spectral sparsification, can be reformulated as follows.

\begin{prop}[\cite{BDM14}, Lemma 13]
\label{prop:BDM}
Let $D$ be a finite set and let $\mu$ be the uniform distribution on $D$. Let $a=(a_1,\hdots,a_m)^\top$ and $b=(b_1,\dots,b_M)^\top$
be orthonormal families in $L_2(D,\mu)$.
Then for any integer $n>m$, there exist points $x_1,\dots,x_n\in D$ and weights ${w_1,\dots,w_n>0}$ such that
\[
\lambda_{\min}\left( \sum_{i=1}^{n} w_i\, 
a(x_i)a(x_i)^* \right) \geq
\left(1-\sqrt{\frac{m}{n}}\right)^2
\]
and
\[
\,
\lambda_{\max}\left(\sum_{i=1}^{n} w_i\, b(x_i)b(x_i)^* \right) \leq \left(1+\sqrt{\frac{M}{n}}\right)^2.
\]
\end{prop}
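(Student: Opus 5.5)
We would prove Proposition~\ref{prop:BDM} by the barrier-potential method of \cite{BSS}, run with two separate barriers: a lower barrier for the family $a$ and an upper barrier for the family $b$. Write $N=|D|$. Orthonormality under the uniform measure gives
\[
\frac1N\sum_{x\in D} a(x)a(x)^*=I_m,
\qquad
\frac1N\sum_{x\in D} b(x)b(x)^*=I_M .
\]
We build the matrices $A_k=\sum_{i\le k} w_i\,a(x_i)a(x_i)^*$ and $B_k=\sum_{i\le k} w_i\,b(x_i)b(x_i)^*$ greedily, one point per step, starting from $A_0=0$ and $B_0=0$.

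Throughout we keep barriers $\ell_k<\lambda_{\min}(A_k)$ and $u_k>\lambda_{\max}(B_k)$, together with the potentials
\[
\Phi_\ell(A)=\Tr(A-\ell I)^{-1},\qquad \Phi^u(B)=\Tr(uI-B)^{-1}.
\]
At every step we shift $\ell_{k+1}=\ell_k+\delta_L$ and $u_{k+1}=u_k+\delta_U$. We choose a point $x$ and weight $t>0$ so that the potentials do not increase, that is, $\Phi_{\ell_{k+1}}(A_{k+1})\le\Phi_{\ell_k}(A_k)$ and $\Phi^{u_{k+1}}(B_{k+1})\le\Phi^{u_k}(B_k)$.

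The one-step analysis is the Sherman--Morrison computation of \cite{BSS}, applied separately to each family.
\begin{itemize}
\item For the upper barrier, adding $t\,b(x)b(x)^*$ keeps the potential from increasing provided $1/t\ge U_k(x)$. Here
\[
U_k(x)=\frac{b(x)^*(u'I-B)^{-2}b(x)}{\Phi^{u}(B)-\Phi^{u'}(B)}+b(x)^*(u'I-B)^{-1}b(x),
\qquad u'=u+\delta_U .
\]
\item For the lower barrier, adding $t\,a(x)a(x)^*$ works provided $1/t\le L_k(x)$. Here
\[
L_k(x)=\frac{a(x)^*(A-\ell'I)^{-2}a(x)}{\Phi_{\ell'}(A)-\Phi_{\ell}(A)}-a(x)^*(A-\ell'I)^{-1}a(x),
\qquad \ell'=\ell+\delta_L .
\]
\end{itemize}
Hence a valid $t$ exists as soon as some $x$ satisfies $U_k(x)\le L_k(x)$. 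It suffices to show $\frac1N\sum_x U_k(x)\le\frac1N\sum_x L_k(x)$, which by averaging produces such a point.

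The averages are computed from the isotropy identities above: averaging $a(x)^*Ta(x)$ over $x$ gives $\Tr T$ for $T$ an $m\times m$ matrix, and the same holds with $b$ and $M\times M$ matrices. Then the standard BSS estimates apply.
\begin{itemize}
\item The average of $U_k$ is at most $\frac1{\delta_U}+\eps_U$ whenever $\Phi^{u}\le\eps_U$.
\item The average of $L_k$ is at least $\frac1{\delta_L}-\eps_L$ whenever $\Phi_\ell\le\eps_L$. This second estimate is the one that uses convexity together with the Cauchy--Schwarz inequality $\Tr(A-\ell'I)^{-2}\ge(\Phi_{\ell'})^2/m$.
\end{itemize}
The only new feature compared to \cite{BSS} is that the upper potential lives in dimension $M$ and the lower potential in dimension $m$. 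The initial potentials are therefore $\Phi^{u_0}(0)=M/u_0$ and $\Phi_{\ell_0}(0)=m/|\ell_0|$, with $\ell_0<0$.

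It remains to tune the parameters, and this is the step we expect to be the main obstacle. We need the per-step condition $\frac1{\delta_U}+\eps_U\le\frac1{\delta_L}-\eps_L$, and after $n$ steps the ratio of the final barriers $(\ell_0+n\delta_L)/(u_0+n\delta_U)$ must be optimized. Since the final matrices are only defined up to a common scaling of the weights, we can normalize afterwards. Mimicking BSS, we set
\[
\eps_L=1,\qquad \ell_0=-m,\qquad \delta_L=\sqrt{m/n},
\]
and choose $u_0$ and $\delta_U$ proportional to $M$-dependent quantities such that $\eps_U=M/u_0$ and $\frac1{\delta_U}+\frac{M}{u_0}=\frac1{\delta_L}-1$. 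Then, after rescaling by $1/n$, $\lambda_{\min}$ and $\lambda_{\max}$ must come out as $(1-\sqrt{m/n})^2$ and $(1+\sqrt{M/n})^2$ respectively. Since only the product of the two bounds is invariant under scaling, getting exactly these two squared forms at the same time requires a careful choice of the two independent scalings. Concretely, we may use separate normalizations $\ell_0=-\sqrt{mn}$ and $u_0=\sqrt{Mn}$, and verify the two final bounds after dividing by $n$ times the appropriate factor; this is possible because the weights enter both matrices identically. If the constants do not match exactly, we would allow the barrier potentials to have different target levels and solve the resulting quadratic for $\delta_U$.

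Finally, $n>m$ guarantees $\delta_L<1$, so the lower condition is feasible. After the last step the final weights are rescaled, and the proof is finished.
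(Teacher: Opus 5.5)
Your strategy is the paper's. Proposition~\ref{prop:BDM} is read off from Theorem~\ref{thm:main} with $\mathbb I=\{1,\dots,M\}$ and $J=I_M$, and the proof of that theorem is exactly a two-barrier BSS argument: the barriers $\ell,u$ are absorbed into $A,B$, and $\Psi(B)=\Tr(B^{-1})$ when $J=I_M$. The one-step and averaging estimates you quote are the right ones. One correction: the lower averaging bound uses the dimension-free inequality $\big(\Tr(YZ)\big)^2\le\Tr(Y)\,\Tr(YZ^2)$, with $Y=(A-\ell I)^{-1}$ and $Z=(A-\ell' I)^{-1}$, together with $\delta_L\le 1/\eps_L$, as in Lemma~\ref{lemma:pos_measure}. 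It does not use $\Tr Z^2\ge(\Tr Z)^2/m$.

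The genuine gap is the step you flag yourself. The whole content of the statement is the pair of constants, and you never exhibit parameters that produce them. What you do say about this step is partly wrong:
\begin{itemize}
\item A common rescaling of the weights preserves the \emph{ratio} $\lambda_{\max}/\lambda_{\min}$, not the product.
\item Since the weights are shared, this is the only scaling available, not two independent ones. So you need $u_n/\ell_n\le(1+s)^2/(1-r)^2$, with $r=\sqrt{m/n}$ and $s=\sqrt{M/n}$.
\item Your concrete normalization $\ell_0=-\sqrt{mn}$, $u_0=\sqrt{Mn}$ pins the scale-invariant quantity $u_0/|\ell_0|$ to $\sqrt{M/m}$, which is the wrong value. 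For $M=m$ and $n=4m$ the target ratio is $9$, but every admissible $\delta_L,\delta_U$ (i.e.\ $\frac1{\delta_U}+\eps_U\le\frac1{\delta_L}-\eps_L$) gives $u_n/\ell_n\ge 7+4\sqrt3\approx 13.9$.
\end{itemize}

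The missing choice, which is the paper's with $J=I_M$, does two things. It makes both sides of the per-step condition equal. It also offsets each barrier by $m/r$, respectively $M/s$, times its own step:
\[
\delta_L=\frac{1-r}{n},\quad \ell_0=-\frac{m}{r}\,\delta_L=-\sqrt{mn}\,\delta_L,\qquad \delta_U=\frac{1+s}{n},\quad u_0=\frac{M}{s}\,\delta_U=\sqrt{Mn}\,\delta_U .
\]
Then $\eps_L=r/\delta_L$ and $\eps_U=s/\delta_U$, so $\frac1{\delta_L}-\eps_L=n=\frac1{\delta_U}+\eps_U$. The requirement $\delta_L<1/\eps_L$ is exactly $r<1$, i.e.\ $n>m$. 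After $n$ steps, $\ell_n=n\delta_L(1-r)=(1-r)^2$ and $u_n=n\delta_U(1+s)=(1+s)^2$, with no final rescaling at all.

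In your normalization $\eps_L=1$, $\ell_0=-m$, $\delta_L=r$, this corresponds to $\delta_U=\frac{r(1+s)}{1-r}$ and $u_0=\frac{rsn(1+s)}{1-r}$, followed by dividing the weights by $\frac{rn}{1-r}$. Note that then $u_0/|\ell_0|=\sqrt{M/m}\,\frac{1+s}{1-r}$.

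The paper uses the same formulas $\ell_0=-\frac mr\delta_L$, $u_0=\frac Ms\delta_U$, but with $r=\sqrt{(m-1)/n}$ and $s=\sqrt{(M-1)/n}$. It also uses the sharper final bounds $\lambda_{\min}\ge\ell_n+1/\eps_L$ and $\lambda_{\max}\le u_n-1/\eps_U$, which is where the $m-1$ and $M-1$ of Theorem~\ref{thm:main} come from.
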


Our main contribution, stated below, is the third kind of generalization. Namely, we replace the dimension $M$ by an effective dimension in the
upper frame bound.
\smallskip

\begin{theorem}
\label{thm:main}
Let $(D,\mu)$ be a measure space, and let $a=(a_1,\hdots,a_m)^\top$
and $b= (b_k)_{k\in \mathbb I}^\top$ be families of square-integrable functions on $D$,
where $\mathbb I$ is at most countable.
Assume that $I:=\int_D a(x)a(x)^*d\mu(x)$ and $J:=\int_D b(x)b(x)^*d\mu(x)$ are positive definite, and that $J$
has finite trace and effective dimension
\[
\Tr(J) \,=\, \sum_{k\in \mathbb I} \|b_k\|_{L_2(D,\mu)}^2 \,<\,\infty
\quad\text{and}\quad M:=\frac{\Tr(J)}{\lambda_{\max}(J)}.
\]
Then, for any
$n\geq m$, there exist points $x_1,\dots,x_n\in D$ and weights ${w_1,\dots,w_n>0}$ such that
\[
\lambda_{\min}\left( \sum_{i=1}^{n} w_i\, 
a(x_i)a(x_i)^* \right) \geq
\left(1-\sqrt{\frac{m-1}{n}}\right)^2
\lambda_{\min}(I)
\]
and
\[
\lambda_{\max}\left(\sum_{i=1}^{n} w_i\, b(x_i)b(x_i)^* \right) \leq \left(1+\sqrt{\frac{M-1}{n}}\right)^2
\lambda_{\max}(J).
\]
\end{theorem}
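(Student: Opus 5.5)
We follow the barrier-function method of Batson, Spielman and Srivastava, as in its dual-set form from \cite{BDM14}. Two changes are needed: the upper barrier is controlled by a trace-class potential, so that only $\Tr(J)$ enters and not $M$ itself, and the averaging step is done with respect to $\mu$ instead of the uniform measure.

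\textbf{Normalization.} Replace $a$ by $I^{-1/2}a$ and $b$ by $b/\lambda_{\max}(J)$-scaled, i.e.\ $\lambda_{\max}(J)^{-1/2}b$. Then $I=\mathrm{Id}_m$, $J\preceq \mathrm{Id}$ and $\Tr(J)=M$. It now suffices to obtain the lower bound $(1-\sqrt{(m-1)/n})^2$ and the upper bound $(1+\sqrt{(M-1)/n})^2$.

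\textbf{Iteration.} We build $A_t=\sum_{i\le t} w_i a(x_i)a(x_i)^*$ and $B_t=\sum_{i\le t} w_i b(x_i)b(x_i)^*$ greedily. We maintain barriers $\ell_t<\lambda_{\min}(A_t)$ and $u_t>\lambda_{\max}(B_t)$, with potentials
\[
\Phi^{\ell}(A)=\Tr\bigl((A-\ell)^{-1}\bigr), \qquad \Psi^{u}(B)=\Tr\bigl(J(u-B)^{-1}\bigr).
\]
The potential $\Psi$ is weighted by $J$. This weighting is the key point: $\Psi^{u}(0)=\Tr(J)/u=M/u$ is finite even in infinite dimension, and $\Psi$ still blows up as $u\downarrow\lambda_{\max}(B)$.

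At each step we shift $\ell\mapsto\ell+\delta_L$ and $u\mapsto u+\delta_U$. The Sherman–Morrison formula then gives explicit functions $L_t(x)$ and $U_t(x)$ with the following property: adding $w\,a(x)a(x)^*$, $w\,b(x)b(x)^*$ keeps both potentials non-increasing provided $U_t(x)\le 1/w\le L_t(x)$. These functions are quadratic forms in $a(x)$ and $b(x)$, respectively.

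\textbf{Existence of a point.} We need some $x$ with $U_t(x)\le L_t(x)$. It is enough to show $\int U_t\,d\mu\le \int L_t\,d\mu$. Integrating the quadratic forms against $\mu$ replaces $a a^*$ by $\mathrm{Id}$ and $bb^*$ by $J$. The upper integral then involves terms such as $\Tr\bigl(J(u'-B)^{-2}J\bigr)$. This is exactly where the $J$-weighting is needed: using $J\preceq\mathrm{Id}$, this term is bounded by $\Tr\bigl(J(u'-B)^{-2}\bigr)$, and the resulting expression depends only on $\Psi$. So the standard BSS estimates go through, with $m$ and $M$ in their usual roles:
\[
\int U\,d\mu\le \frac1{\delta_U}+\Psi,\qquad \int L\,d\mu\ge \frac1{\delta_L}-\Phi.
\]
Since $\int(L-U)\,d\mu\ge0$ on a probability space (or any measure space, where finiteness of the integrals follows from square integrability), some $x$ with $L(x)\ge U(x)$ exists. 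One has to rule out that the inequality holds only on a null set. This is handled by noting that $\{L-U\ge0\}$ has positive measure whenever the integral is nonnegative and $L-U$ is not a.e.\ negative; if necessary, replace $\ge$ by a slightly relaxed inequality and pass to a limit.

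\textbf{Parameter choice.} To get the sharp constants with $m-1$ and $M-1$, we use the refined parameter choice of BSS type. Starting barriers are $\ell_0=-\sqrt{(m-1)n}$-type and $u_0$ of order $M+\sqrt{(M-1)n}$, scaled so that after $n$ steps
\[
\frac{u_n}{\ell_n}=\frac{\bigl(1+\sqrt{(M-1)/n}\bigr)^2}{\bigl(1-\sqrt{(m-1)/n}\bigr)^2}.
\]
Rescaling all weights at the end gives both stated bounds.

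\textbf{Main obstacle.} The most delicate step is to optimize $\delta_L,\delta_U,\ell_0,u_0$ so that $1/\delta_U+\Psi_0\le 1/\delta_L-\Phi_0$ holds while producing exactly $m-1$ and $M-1$ rather than $m$ and $M$. The $-1$ appears to require the sharper one-sided analysis, where the first step is handled separately (a single rank-one term already gives one eigenvalue for free). I would first try to reproduce the known sharp BSS computation with $M$ in place of the dimension, and use the fact that $\Psi$ enters only through $\Tr(J)$.
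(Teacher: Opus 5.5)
\textbf{There is a genuine gap: your scheme proves only the weaker bounds with $m$ and $M$, not the $m-1$, $M-1$ constants claimed.}

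Your scalar-barrier scheme is sound as far as it goes. The averaging works: the term that actually appears is $\Tr(JXJX)$ with $X=(u'-B)^{-1}$, and $J\preccurlyeq I$ gives $XJX\preccurlyeq X^2$, hence $\Tr(JXJX)\le\Tr(JX^2)\le\Tr(JWX)$. But the scheme cannot produce the stated constants, and your parameter-choice paragraph asserts the sharp ratio without a mechanism that yields it.

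With $A_0=B_0=0$ you have $\ell_0=-m/\Phi_0$ and $u_0=M/\Psi_0$, and the only conclusions you draw are $\lambda_{\min}(A_n)>\ell_n$ and $\lambda_{\max}(B_n)<u_n$. Normalize $\delta_L=1$; then the constraint forces $\delta_U\ge(1-\Phi_0-\Psi_0)^{-1}$. Minimizing
\[
\frac{M/\Psi_0+n/(1-\Phi_0-\Psi_0)}{n-m/\Phi_0}
\]
first over $\Psi_0$ and then over $\Phi_0$ gives exactly $(\sqrt n+\sqrt M)^2/(\sqrt n-\sqrt m)^2$. So you obtain the bounds with $m$ and $M$ in place of $m-1$ and $M-1$, and only for $n>m$. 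No tuning of $\ell_0,u_0,\delta_L,\delta_U$ reaches the $-1$'s or the case $n=m$. In the paper they come from two further devices, not from a special first step.

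\textbf{Lower side.} Do not discard the potential at the end. Since $\Phi^{\ell}(A)\ge(\lambda_{\min}(A)-\ell)^{-1}$, you get $\lambda_{\min}(A_n)\ge\ell_n+1/\Phi_0=n\delta_L-(m-1)/\Phi_0$. With $\delta_L=(1-r)/n$, $\Phi_0=r/\delta_L$ and $r=\sqrt{(m-1)/n}$, this equals $(1-r)^2$.

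\textbf{Upper side.} The analogous step fails for scalar shifts, because $u-B\succcurlyeq\Psi^{-1}J$ gives nothing in directions where $J$ is small. The paper instead uses an operator barrier that starts at a multiple of the identity but moves by multiples of $J$. With $s=\sqrt{(M-1)/n}$ and $\zeta=(1+s)/n$, it takes $B_0=\frac{\zeta\Tr(J)}{s}I$, updates $B\mapsto B+\zeta J-wbb^*$, and uses the potential $\Psi(B)=\Tr(JB^{-1})$. Then $B_n\succcurlyeq\Psi(B_n)^{-1}J\succcurlyeq\frac{\zeta}{s}J$ yields
\[
\sum_{i=1}^n w_i\, b(x_i)b(x_i)^*\preccurlyeq\frac{\zeta}{s}\Tr(J)\,I+\Big(n\zeta-\frac{\zeta}{s}\Big)J .
\]
Only at this point is $J\preccurlyeq\lambda_{\max}(J)I$ used, giving $(1+s)^2\lambda_{\max}(J)$. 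With $J$-increments the averaging also needs no normalization, since $\Tr(JW-JX)=\zeta\Tr(JWJX)\ge\zeta\Tr(JXJX)$.

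\textbf{Degenerate cases.} This parametrization breaks down for $m=1$ (where $r=0$) and for $M<1+1/n$ (where $n\zeta-\zeta/s<0$). The paper handles these separately, replacing the lower verifier by $n|a(x)|^2$, respectively the upper verifier by $\frac{n}{\Tr(J)}\sum_k|b_k(x)|^2$. You would need to address them as well.
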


\smallskip

One can retrieve Proposition~\ref{prop:BDM} from Theorem~\ref{thm:main} by taking $\mathbb I=\{1,\dots,M\}$ and assuming that $a$ and $b$ are orthonormal families. Indeed, in that case, we see that $I$ and $J$ are the $m\times m$ and $M\times M$ identity matrices. In turn, Proposition~\ref{prop:BDM} implies Proposition~\ref{thm:BSS} by taking $M=m$ and $b=a$.

\begin{remark}
The factor $m-1$ instead of $m$ in the lower bound is a minor improvement
already appearing in \cite{ChkifaDolbeault24}. It allows to treat the case $n=m$, with the bound
\[
1-\sqrt{\frac{m-1}{m}}>\frac{1}{2m}.
\]
In the same way, the factor $M-1$ instead of $M$ 
in the upper bound is most interesting when $J$ has rank one, where we recover \cite[Lemma~14]{BDM14}.
This case will be used in Corollary~\ref{coro:intro-equal} to obtain discretizations with equal weights. We will also leverage these improvements in Propositions~\ref{rk:weight_control} and Corollary~\ref{coro:noisy-recovery}, when considering families $a$ and $b$ containing one more function.
\end{remark}

\begin{remark}
Throughout the article, $L_2(D,\mu)$ and its subspaces are assumed to consist of complex-valued functions, not equivalence classes, which is needed for discretization. 
Moreover, it is possible to extend our results to Hilbert-valued functions, by combining them with \cite[Theorem 2.1]{BartelDung24}.
\end{remark}

The proof of Theorem~\ref{thm:main}, see Section~\ref{sec:proof}, 
builds upon the original proof of Proposition~\ref{thm:BSS},  
and the points and weights can be constructed in a quite similar way, see Algorithm~\ref{alg:abstract-construction} in Section~\ref{sec:implementation}.
Although the proof of Theorem~\ref{thm:main} is self-contained, 
we recommend reading the proof of \cite[Theorem~3.1]{BSS} first, since it comes with a very nice physical intuition.

Before we come to the proof, we first discuss several applications of this linear algebra result.
In Section~\ref{sec:discretization}, we discuss its implications to the problem of norm discretization.
In Section~\ref{sec:sampling}, we use these discretization results to obtain results on the problem of sampling recovery
and on the error of (weighted) least squares algorithms. The relation between these results is illustrated in Figure~\ref{fig:graph-results}.

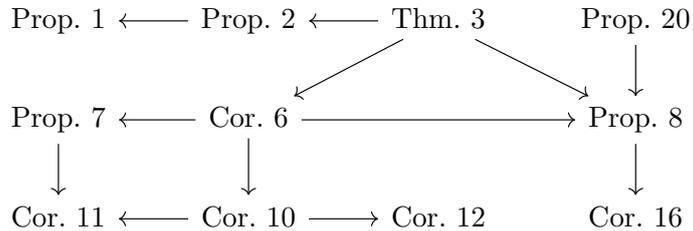
\begin{figure}[H]
\label{fig:graph-results}
\centering
\begin{tikzcd}
\rm Prop.~\ref{thm:BSS}
&\rm Prop.~\ref{prop:BDM} \arrow[l]
&\rm Thm.~\ref{thm:main} \arrow[l] \arrow[ld] \arrow[rd]
&\rm Prop.~\ref{prop:KW} \arrow[d] \\
\rm Prop.~\ref{rk:weight_control} \arrow[d]
&\rm Cor.~\ref{coro:main} \arrow[l] \arrow[d] \arrow[rr]
& &\rm Prop.~\ref{coro:intro-equal} \arrow[d] \\
\rm Cor.~\ref{coro:noisy-recovery}
&\rm Cor.~\ref{cor:recovery} \arrow[l] \arrow[r]
&\rm Cor.~\ref{cor:sampling-numbers-H}
&\rm Cor.~\ref{cor:recovery-unif}
\end{tikzcd}
\caption{Summary of the results of Sections~\ref{sec:intro}, \ref{sec:discretization} and \ref{sec:sampling}, with arrows denoting implications. The first line contains linear algebra results, the second deals with norm discretization and the third with sampling recovery. The lower left block details the reproducing kernel Hilbert space (RKHS) setting, with the second column directly applying Thm.~\ref{thm:main}, while the first column adds a control on the weights, and the third looks at sampling numbers. Lastly, the fourth column is focused on the $L_p$-setting, in which all weights are equal.}
\end{figure}

In fact, it is quite interesting that many of the recent advances in 
the areas of both norm discretization and sampling recovery
can be derived solely on the basis of Theorem~\ref{thm:main}.
Previous results used the non-constructive Kadison-Singer theorem from \cite{MSS},
see, e.g., \cite{CohenDolbeault,DKU,NSU},
which now becomes unnecessary.
This does not only simplify the proofs of those results
but also much improves the absolute constants.
Moreover, the least squares algorithms based on Theorem~\ref{thm:main} 
are implementable for various examples where 
an implementable construction of an optimal algorithm 
has previously been unknown
(e.g., for function spaces of mixed smoothness),
see Theorem~\ref{thm:constructive} and Example~\ref{ex:mixed-smoothness}.
A discussion of the practical implementation, along with numerical illustrations, is provided in Sections~\ref{sec:implementation} and~\ref{subsec:numerics}.

\section{Norm discretization}
\label{sec:discretization}

One of our two applications is the discretization of norms. This problem has recently received much attention. For further reading, we refer to the comprehensive treatments of the topic in \cite{Groe,KKLT,LMT,T18}. 
The BSS theorem (Proposition~\ref{thm:BSS}) can be stated 
equivalently as a result on $L_2$-norm discretization, 
where one aims to bound the $L_2$-norm of a function from a finite-dimensional space in terms of a discrete $\ell_2$-norm based on finitely many function evaluations.
Indeed, let $(D,\mu)$ be as in Proposition~\ref{thm:BSS}, and 
$V_m\subset L_2(D,\mu)$ with $\dim(V_m)=m$.  
Then, Proposition~\ref{thm:BSS} states that for every $n>m$, there exist points $x_1,\dots,x_n\in D$ and weights $w_1,\dots,w_n\in \R_+$ such that
\begin{equation} \label{eq:BSS-d}
\left(1-\sqrt{\frac{m}{n}}\right)\,\|f\|_2 
\;\leq\; 
\sqrt{\sum_{i=1}^n w_i|f(x_i)|^2}
\;\leq\; \left(1+\sqrt{\frac{m}{n}}\right)\,\|f\|_2
\end{equation}
for all $f\in V_m$, where we denote $\|\cdot\|_p=\|\cdot\|_{L_p(D,\mu)}$.
A similar result for general probability spaces $(D,\mu)$ is stated in \cite[Theorem~6.4]{DPSTT}.

Also our new result (Theorem~\ref{thm:main}) can be restated equivalently in terms of norm discretization.
In the upper bound,
the finite-dimensional space $V_m$
can be replaced with a reproducing kernel Hilbert space $H$, i.e.,
a Hilbert space of functions such that function evaluations are continuous (although the functions themselves are not necessarily continuous). 
We assume that $H$ is 
embedded 
in $L_2(D,\mu)$ continuously 
\begin{equation}\label{cond:continuous}
\lambda:=\sup_{f\in H} \frac{\Vert f \Vert_2^2}{\Vert f \Vert_H^2}<\infty
\end{equation}
and injectively
\begin{equation}\label{cond:injective}
 \Vert f \Vert_2 \ne 0
 \quad \text{for all} \quad f\in H \setminus \{0\},
\end{equation}
and that its kernel $K$ has finite trace 
\begin{equation}\label{eq:finite-trace}
 \Tr(K) := \int K(x,x)\,d\mu(x) < \infty.
\end{equation}
By \cite[Lemma~2.3]{Steinwart-Scovel}, this last condition ensures that the embedding $H\hookrightarrow L_2$ is Hilbert-Schmidt. In particular, \eqref{eq:finite-trace} implies \eqref{cond:continuous} with $\lambda\leq \Tr(K)$.

\smallskip

\begin{coro} \label{coro:main}
Let $(D,\mu)$ be a measure space, 
$V_m\subset L_2(D,\mu)$ be an $m$-dimensional space of functions and let $H$ be 
a reproducing kernel Hilbert space satisfying conditions \eqref{cond:continuous}, \eqref{cond:injective} and \eqref{eq:finite-trace}. 
For $n\geq m$, there exist points $x_1,\dots,x_n\in D$ and weights ${w_1,\dots,w_n>0}$ such that
\begin{equation}
\label{eq-low-coro-main}
\left(1-\sqrt{\frac{m-1}{n}}\right)\,\|f\|_2
\,\leq\,
\sqrt{\sum_{i=1}^n w_i|f(x_i)|^2}
\qquad \text{for all } f \in V_m
\end{equation}
and 
\begin{equation}
\label{eq-up-coro-main}
\sqrt{\sum_{i=1}^n w_i|g(x_i)|^2}
\,\leq\,
\left(1 +\sqrt{\frac{M-1}{n}} \right)\, \sqrt\lambda\,
\left\| g \right\|_H
\qquad \text{for all } g \in H,
\end{equation}
where $M=\Tr(K)/\lambda$ is the ``effective dimension'' of $H$ in $L_2(D,\mu)$.
\end{coro}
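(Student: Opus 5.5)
We will deduce Corollary~\ref{coro:main} from Theorem~\ref{thm:main} by choosing the two families $a$ and $b$ suitably. For $a$, take an $L_2(D,\mu)$-orthonormal basis $a_1,\dots,a_m$ of $V_m$. Then $I$ is the $m\times m$ identity matrix and $\lambda_{\min}(I)=1$. This requires the $L_2$-norm to be a norm on $V_m$, i.e.\ that no nonzero function in $V_m$ vanishes $\mu$-a.e.; we implicitly assume this, since otherwise the lower bound \eqref{eq-low-coro-main} is trivial for such functions and one can pass to a complement.

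For $b$, we use the singular value decomposition of the embedding $\mathrm{id}\colon H\hookrightarrow L_2$. This embedding is Hilbert--Schmidt by \eqref{eq:finite-trace} and \cite[Lemma~2.3]{Steinwart-Scovel}, and it is injective by \eqref{cond:injective}. Hence there is an orthonormal basis $(e_k)_{k\in\mathbb I}$ of $H$, with $\mathbb I$ at most countable, such that the $e_k$ are orthogonal in $L_2$ with $\|e_k\|_2^2=\sigma_k^2>0$. Set $b_k:=e_k$, regarded as functions on $D$, which is legitimate since $H$ consists of genuine functions. Then $J=\mathrm{diag}(\sigma_k^2)$ is positive definite and $\lambda_{\max}(J)=\sup_k\sigma_k^2=\lambda$, which equals the squared operator norm in \eqref{cond:continuous}. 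By the reproducing property and Parseval, $K(x,x)=\sum_k|e_k(x)|^2$ pointwise. Monotone convergence then gives
\[
\Tr(J)=\sum_k\|e_k\|_2^2=\int_D K(x,x)\,d\mu(x)=\Tr(K)<\infty ,
\]
so the effective dimension of Theorem~\ref{thm:main} is exactly $M=\Tr(K)/\lambda$.

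Applying Theorem~\ref{thm:main} yields points $x_i$ and weights $w_i>0$. The lower bound follows by writing $f=\sum_j c_j a_j$, so that $\|f\|_2=|c|$ and $\sum_i w_i|f(x_i)|^2=c^*\bigl(\sum_i w_i a(x_i)a(x_i)^*\bigr)c\ge (1-\sqrt{(m-1)/n})^2|c|^2$. Taking square roots gives \eqref{eq-low-coro-main}.

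For the upper bound, write $g=\sum_k c_k e_k$ with $(c_k)\in\ell_2(\mathbb I)$ and $\|g\|_H=\|c\|_{\ell_2}$. Point evaluation is continuous on $H$, so $g(x_i)=\sum_k c_k e_k(x_i)=b(x_i)^\top c$, where $b(x_i)\in\ell_2$ because $\sum_k|e_k(x_i)|^2=K(x_i,x_i)<\infty$. Hence
\[
\sum_i w_i|g(x_i)|^2=c^*\Bigl(\sum_i w_i b(x_i)b(x_i)^*\Bigr)c\le \Bigl(1+\sqrt{\tfrac{M-1}{n}}\Bigr)^2\lambda\,\|c\|^2 ,
\]
where the matrix is interpreted as a bounded operator on $\ell_2(\mathbb I)$, up to conjugation conventions. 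Taking square roots gives \eqref{eq-up-coro-main}.

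The main obstacle is the infinite-dimensional bookkeeping in the choice of $b$: justifying the SVD basis with $\mathbb I$ countable, the pointwise identity $K(x,x)=\sum_k|e_k(x)|^2$, and the identification $\lambda_{\max}(J)=\lambda$. Each of these is standard for Hilbert--Schmidt embeddings of an RKHS.
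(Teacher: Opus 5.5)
Your proof is correct and is essentially the paper's argument: you take an $L_2$-orthonormal basis of $V_m$ for $a$ and the $H$-orthonormal, $L_2$-orthogonal singular basis of $H\hookrightarrow L_2$ for $b$, so that $J$ is diagonal with $\lambda_{\max}(J)=\lambda$ and $\Tr(J)=\Tr(K)$, then apply Theorem~\ref{thm:main} and expand in coefficients, and your justifications of the SVD basis and of $K(x,x)=\sum_k|e_k(x)|^2$ fill in details the paper leaves implicit. Your aside on nonzero $\mu$-null functions in $V_m$ is slightly incomplete, because passing to a complement only works if the points also avoid the common null set $E$ of those functions (for instance by applying Theorem~\ref{thm:main} on $D\setminus E$, which leaves $I$ and $J$ unchanged), but the paper simply assumes this degenerate case away.
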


\begin{proof}
Let $a=(a_1,\hdots,a_m)^\top$ be an $L_2$-orthonormal basis of~$V_m$,
and $b=(b_1,b_2,\hdots)^\top$ be the basis of left singular vectors of the embedding $ H\hookrightarrow L_2$.
Then $I=\int a\,a^*d\mu$ is the identity, and
$J=\int b\,b^*d\mu$
is diagonal with trace $\Tr(J) =\Tr(K)$ and largest entry $\lambda_{\max}(J) = \lambda$,
since $b$ is orthonormal in $H$ and orthogonal in $L_2$.

Let $x_i$ and $w_i$ be the points and weights from Theorem~\ref{thm:main}, and denote
\[
G=\sum_{i=1}^{n} w_ia(x_i)a(x_i)^*\quad\text{and}\quad
\Gamma = \sum_{i=1}^{n} w_ib(x_i)b(x_i)^*.
\]
Then, any function $f\in V_m$ can be written as $f=\sum_{k=1}^{m} c_k a_k$ 
with $c\in \C^m$. Hence,
\[
 \sum_{i=1}^n w_i|f(x_i)|^2 = c\,G\,c^*
\geq \lambda_{\min}(G)\,|c|^2
 =\lambda_{\min}(G)\,\Vert f\Vert_2^2.
\]
In the same way, decomposing any $g\in H$ as $g=\sum_{k\in \mathbb I} c_k b_k$ with $c\in \ell_2(\II)$, we get\vspace{-4mm}
\[
 \sum_{i=1}^n w_i|g(x_i)|^2 = c\,\Gamma\,c^*
 \leq \lambda_{\max}(\Gamma)\,\Vert c \Vert_{\ell_2(\II)}^2
 =\lambda_{\max}(\Gamma)\,\Vert g \Vert_{H}^2.\qedhere
\]
\end{proof}

It is additionally possible to bound the sum of the weights as follows.
\begin{prop}
\label{rk:weight_control}
Let the assumptions of Corollary~\ref{coro:main} hold.
If $(D,\mu)$ is a probability space and $\int f\,d\mu=0$ for all $f\in H$,
then for $n\geq m$, there exist points $x_1,\dots,x_n\in D$ and weights ${w_1,\dots,w_n>0}$ 
such that \eqref{eq-low-coro-main} holds, \eqref{eq-up-coro-main} holds with $M$ instead of $M-1$, and 
\[
\sum_{i=1}^n w_i \leq \left(1+\sqrt\frac{M}{n}\right)^2.
\]
\end{prop}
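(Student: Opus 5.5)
We apply Theorem~\ref{thm:main} with the same family $a$ as in the proof of Corollary~\ref{coro:main}, namely an $L_2$-orthonormal basis of $V_m$, so that $I$ is the $m\times m$ identity. For $b$ we enlarge the family of left singular vectors by one function: let $b_0 := \sqrt{\lambda}\,\mathbbm 1_D$ and $\tilde b = (b_0, b_1, b_2, \dots)^\top$, where $(b_k)_{k\ge1}$ is orthonormal in $H$ and orthogonal in $L_2$ as before. Since $\mu$ is a probability space, $b_0\in L_2$ with $\|b_0\|_2^2=\lambda$. The hypothesis $\int f\,d\mu = 0$ for all $f\in H$ gives $\langle b_0, b_k\rangle_{L_2} = \sqrt\lambda\int \overline{b_k}\,d\mu=0$ for every $k\ge1$. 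Hence
\[
\tilde J = \int \tilde b\,\tilde b^*\,d\mu
\]
is diagonal with entries $\lambda, \|b_1\|_2^2, \|b_2\|_2^2, \dots$. It is positive definite by \eqref{cond:injective}, and it satisfies $\lambda_{\max}(\tilde J) = \lambda$ and $\Tr(\tilde J) = \lambda + \Tr(K)$. The effective dimension of the enlarged family is therefore $\tilde M = \Tr(\tilde J)/\lambda = M+1$, so $\tilde M - 1 = M$. This is exactly why the upper bound degrades from $M-1$ to $M$.

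Theorem~\ref{thm:main} then provides points $x_i$ and weights $w_i>0$ such that $\lambda_{\min}(G)\ge (1-\sqrt{(m-1)/n})^2$ and
\[
\lambda_{\max}(\tilde\Gamma) \le \left(1+\sqrt{M/n}\right)^2 \lambda,
\qquad
\tilde \Gamma = \sum_{i=1}^n w_i\,\tilde b(x_i)\tilde b(x_i)^*.
\]
The lower bound \eqref{eq-low-coro-main} follows verbatim as in the proof of Corollary~\ref{coro:main}.

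For the upper bound we use two principal submatrices of $\tilde\Gamma$, since $\lambda_{\max}$ of a principal submatrix is at most $\lambda_{\max}$ of the full matrix.
\begin{itemize}
\item The submatrix indexed by $k\ge1$ is exactly $\Gamma$. Repeating the computation of Corollary~\ref{coro:main} gives \eqref{eq-up-coro-main} with $M$ in place of $M-1$.
\item The $(0,0)$ entry is $\sum_i w_i |b_0(x_i)|^2 = \lambda\sum_i w_i$. Hence $\lambda \sum_i w_i \le (1+\sqrt{M/n})^2\lambda$, and dividing by $\lambda>0$ gives the claimed weight bound.
\end{itemize}

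The only delicate point is verifying the hypotheses of Theorem~\ref{thm:main} for $\tilde b$. First, orthogonality of the constant to $H$ in $L_2$ is what keeps $\lambda_{\max}(\tilde J)=\lambda$ rather than something larger; this is precisely where the mean-zero assumption is used. Second, $\mu(D)=1$ ensures $b_0\in L_2$ and fixes its norm at $\lambda$. Third, positive definiteness of $\tilde J$ holds because $\tilde J$ is diagonal with strictly positive entries. Everything else is a direct transcription of the proof of Corollary~\ref{coro:main}.
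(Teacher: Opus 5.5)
Your proof is correct and is essentially the paper's argument. The paper applies Corollary~\ref{coro:main} to $\widetilde H = H\oplus\Span\{1\}$, normed so that $\sqrt{\lambda}\cdot 1$ is a unit vector; its singular basis is exactly your enlarged family $\tilde b$, which gives $\tilde\lambda=\lambda$ and effective dimension $M+1$, and your principal-submatrix readout corresponds to the paper evaluating the upper inequality for $\widetilde H$ at $g\in H$ and at $g=1$.
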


\begin{proof}
Apply Corollary~\ref{coro:main} to $\widetilde H:=H\oplus \Span\{1\}$, equipped with the norm
\[
\|f+z\|_{\widetilde H}^2=\|f\|_{H}^2+\frac{|z|^2}\lambda,\qquad f\in H,\quad z\in \C.
\]
Indeed, $\widetilde H$ is also a reproducing kernel Hilbert space embedded in $L_2(D,\mu)$, with maximal eigenvalue
\[
\tilde \lambda:=\sup_{f \in \widetilde H} \frac{\Vert f \Vert_2^2}{\Vert f \Vert_{\widetilde H}^2} =\lambda,
\]
and its kernel $\widetilde K=K+\lambda$ has finite trace
$\Tr(\widetilde K)= \Tr(K)+ \lambda$.
This allows to translate \eqref{eq-low-coro-main} and \eqref{eq-up-coro-main} from $\widetilde H$ to $H$, and we additionally control the weights through
\[
\sqrt{\sum_{i=1}^n w_i}\leq \left(\sqrt{\lambda} +\sqrt{\frac{\Tr(K)}{n}} \right)\|1\|_{\widetilde H}=1+\sqrt{\frac{\Tr(K)}{n\lambda}}.\qedhere
\]
\end{proof}

The discretization inequality~\eqref{eq:BSS-d} is obtained for the special case where $H=V_m$, equipped with the $L_2$-norm, in which case 
$\lambda=1$ and $\Tr(K)=m$.
For the problem of sampling recovery,
as discussed in Section~\ref{sec:sampling},
we will consider infinite-dimensional spaces $H$.

Another interesting application are one-sided
discretization inequalities with equal weights. 
One-sided equal-weight discretization of the $L_2$-norm can be obtained in probability spaces $(D,\mu)$ by choosing $H$ as the one-dimen\-sional space of constant functions.
In combination with a result of Kiefer and Wolfowitz \cite{KW},
this can even be extended to the discretization of $L_p$-norms with $p>2$. 
Let $B(D)$ be the space of bounded functions on $D$.
In Section~\ref{sec:proof-coro}, we prove the following.

\begin{prop}
\label{coro:intro-equal}
Let $(D,\mu)$ be a probability space, and 
$V_m\subset L_2(D,\mu)\cap B(D)$ with $\dim(V_m)=m$. 
For $n\geq m$, there exist points $x_1,\hdots,x_{2n}\in D$ such that,
for all $f\in V_m$ and $2\leq p\leq\infty$, 
\[
\left(1-\sqrt{\frac{m}{n}}\right)
\cdot \|f\|_p 
\;\leq\; 
\,m^{\frac12-\frac1p}\cdot
\sqrt{\frac{1}{n}\sum_{i=1}^{2n} |f(x_i)|^2}.
\]
In the case $p\in\{2,\infty\}$, we can take $n$ points instead of $2n$ points.\\
In the case $n=m$, the factor $1-\sqrt{\frac{m}{n}}$ can be replaced by $\frac{1}{2m}$.
\end{prop}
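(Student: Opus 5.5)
The plan is to prove the two endpoint cases $p=2$ and $p=\infty$ separately with equal weights, and then interpolate. For $p\in(2,\infty)$ I will use the elementary inequality
\[
\|f\|_p\le \|f\|_2^{2/p}\,\|f\|_\infty^{1-2/p}.
\]
It therefore suffices to produce $n$ points $x_1,\dots,x_n$ and $n$ points $x_{n+1},\dots,x_{2n}$ such that, with $c=1-\sqrt{m/n}$ (or $c=\tfrac1{2m}$ when $n=m$), for all $f\in V_m$,
\[
c^2\|f\|_2^2\le \frac1n\sum_{i=1}^n|f(x_i)|^2
\qquad\text{and}\qquad
c^2\|f\|_\infty^2\le \frac mn\sum_{i=n+1}^{2n}|f(x_i)|^2 .
\]
Given these, raise the first to the power $1/p$ and the second to the power $\tfrac12-\tfrac1p$. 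Then bound each of the two partial sums by the full sum over all $2n$ points. Multiplying the results gives the claimed inequality with the factor $m^{\frac12-\frac1p}$. When $p\in\{2,\infty\}$ only one of the two halves is needed, which explains the statement with $n$ points.

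\emph{The $L_2$ endpoint.} I apply Theorem~\ref{thm:main} on $(D,\mu)$ with $a$ an $L_2(\mu)$-orthonormal basis of $V_m$, so $I$ is the identity, and with $b\equiv 1$. Then $J=1$ is rank one, $M=1$, and the upper bound reads $\sum_i w_i\le 1$, the case highlighted in the remark after Theorem~\ref{thm:main}. This alone does not give equal weights, so the real work is here. I will reopen the barrier argument of Section~\ref{sec:proof} in this rank-one case. There the upper potential is a scalar function of the running total $\sum_i w_i$, so adding a point with a \emph{prescribed} weight $t$ always moves the upper barrier by the same amount, whatever point is chosen. I will therefore fix every increment to be exactly $t=1/n$, up to normalisation. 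The remaining task is to show that the averaging argument of BSS, integrated against $\mu$, still yields a point $x$ for which the lower-barrier condition admits weight $t$. I expect the estimate to require
\[
\int_D a(x)^*(A-\ell I)^{-1}a(x)\,d\mu(x)
\]
to be compared with the scalar upper step, exactly as in the unequal-weight proof, but with the step size fixed in advance. This is where $(1-\sqrt{m/n})^2$ should come out, and where the sharper bound via $m-1$ gives the $n=m$ improvement $1-\sqrt{(m-1)/m}>\tfrac1{2m}$. This step is the main obstacle: checking that forcing equal steps does not break the BSS barrier bookkeeping.

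\emph{The $L_\infty$ endpoint.} I invoke the Kiefer--Wolfowitz result (Proposition~\ref{prop:KW}). It gives a probability measure $\nu$ on $D$ (a $D$-optimal design) with
\[
\sup_{x\in D}|f(x)|^2\le m\,\|f\|_{L_2(\nu)}^2 \qquad\text{for all } f\in V_m .
\]
Boundedness of $V_m\subset B(D)$ is what makes this meaningful. Applying the $L_2$ endpoint just proved to the probability space $(D,\nu)$ instead of $(D,\mu)$ gives $n$ points with $c^2\|f\|_{L_2(\nu)}^2\le\frac1n\sum|f(x_i)|^2$. Combining the two displays yields the second inequality above.

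Finally, the interpolation step described in the first paragraph concludes the proof. The constants in the case $n=m$ transfer verbatim, since both endpoints use the same constant $c$.
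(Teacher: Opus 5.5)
Your plan is the paper's proof. It uses equal weights $1/n$ from the rank-one case $b\equiv 1$ (the $M<1+\frac1n$ branch in the proof of Theorem~\ref{thm:main}), re-applies the $L_2$ case on the Kiefer--Wolfowitz measure for $p=\infty$, and interpolates via $\|f\|_p\le\|f\|_2^{2/p}\|f\|_\infty^{1-2/p}$ over the two halves of the $2n$ points.

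The step you flag as the main obstacle is immediate and needs no upper barrier. Since $\Phi$ never increases, Lemma~\ref{lemma:pos_measure} gives $\int L_A^\delta(a(x))\,d\mu(x)>\frac1\delta-\Phi(A)\ge\frac1\delta-\Phi(A_0)=n$. So on a probability space some $x$ has $L_A^\delta(a(x))\ge n$, and Lemma~\ref{lemboth}~a) then accepts the weight $w=1/n$.

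One correction: Proposition~\ref{prop:KW} only gives the constant $\sqrt{m+\eps}$, and the exact constant $m$ can fail (take $V_1$ spanned by $x\mapsto x$ on $D=(0,1)$). You must therefore absorb $\eps$ using the strict inequality $1-\sqrt{(m-1)/n}>\max\{1-\sqrt{m/n},\frac1{2m}\}$, exactly as the paper does.
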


We note that for $p=2$, the assumption $V_m\subset L_2$ is sufficient,
while for $p=\infty$, the assumption $V_m \subset B(D)$ is enough.
Up to constants, Corollary~\ref{coro:intro-equal} has been first proven in~\cite{BSU} in the case $p=2$, and in~\cite{KPUU25-uniform} for $p> 2$. 
There, the authors also rely on BSS~\cite{BSS}, 
but first generate a large enough random point set and then apply Proposition~\ref{thm:BSS}. 
With our direct approach, 
which is based on basically the same algorithm,
see Section~\ref{sec:implementation}, 
we can minimize oversampling and improve the constants.

In particular, by putting $n=2m$ and estimating the right hand side in terms of the maximum at the points, we obtain
that there are points $x_1,\hdots,x_{2m} \in D$ such that, for all $f\in V_m$,
\[
\|f\|_\infty 
\;\leq\; (2 + \sqrt{2})\, \sqrt{m} \cdot\,
\max_{1\leq i \leq 2m}\, \left|f(x_i)\right|.
\]

As the proof of Proposition~\ref{coro:intro-equal} refers to the proof of Theorem~\ref{thm:main}, 
we will present it after the proof of Theorem~\ref{thm:main} in Section~\ref{sec:proof}.

\begin{example}
For general spaces $V_m \subset L_2(D,\mu)$ with a probability space $(D,\mu)$, 
it is not possible to have a two-sided discretization with equal weights. 
Indeed, consider $D=\{0,1\}$ with $\mu(\{0\})=\eps\in (0,1)$, 
and let $V_2=\R^{\{0,1\}}$.
Assume that there are $n$ points $x_1,\dots,x_n\in D$ such that
\begin{equation}\label{eq:equal-2-sided}
\alpha \|f\|_2^2
\leq w \sum_{i=1}^n |f(x_i)|^2
\leq \beta \|f\|_2^2,\quad f\in V_2
\end{equation}
for some positive $\alpha$, $\beta$ and $w$.
The case $f=1$ shows that $\alpha \leq  wn$. 
Due to the lower bound, at least one of the points must be equal to 0, and the upper bound for the indicator function of $\{0\}$ gives the condition $w\leq \beta \eps$, hence $n \geq \frac{\alpha}{\beta} \eps^{-1}$.
As $\eps$ is arbitrarily small, 
there are no fixed values of $\alpha,\beta$ and $n$
such that \eqref{eq:equal-2-sided} can be obtained
for every 2-dimensional subspace of every $L_2$ probability space. 
\end{example}

\section{Sampling Recovery}
\label{sec:sampling}

Based on the discretization inequalities above, 
we easily obtain error bounds for corresponding 
weighted least-squares algorithms
on general approximation spaces~$V_m$.
Assume we are given points $x_1,\hdots,x_n \in D$
and weights $w_1,\hdots,w_n>0$.
The data is assumed to be of the form $y_i = f(x_i)$
for some unknown function $f$, which we want to learn.
We consider the weighted least squares approximation
\begin{equation}\label{def:wls}
\tilde f := \argmin_{g\in V_m} \sum_{i=1}^n w_i\, |y_i -g(x_i)|^2.
\end{equation}
In the following, the points and weights will always be chosen such that the minimizer is uniquely defined. 
As before, the points and weights can be constructed by Algorithm~\ref{alg:abstract-construction} that will be discussed in Section~\ref{sec:implementation}.
Via Corollary~\ref{coro:main},
we obtain the following.

\smallskip

\begin{coro}\label{cor:recovery}
Let $(D,\mu)$ be a measure space. 
Let $H$ be a reproducing kernel Hilbert space satisfying conditions \eqref{cond:continuous}, \eqref{cond:injective} and \eqref{eq:finite-trace}.
Let $V_m \subset H$ be an $m$-dimensional subspace and
let $n\geq m$.
Then, there are points $x_1,\hdots,x_n \in D$
and weights $w_1,\hdots,w_n>0$
such that, for any $f\in H$, the weighted least squares approximation~\eqref{def:wls} with $y_i=f(x_i)$ satisfies
\[
\big\|f- \tilde f\big\|_2 
\,\leq\, 
\left(1+\frac{1+s}{1-r}\right)\sqrt{\lambda_m} \,\big\|f-P_m f\big\|_H,
\]
where
\[
\lambda_m:=\sup_{f \in H_m} \frac{\Vert f \Vert_2^2}{\Vert f \Vert_{H}^2},
\qquad
r:=\sqrt\frac{m-1}{n}
\quad\text{and}\quad
s:=\sqrt{\frac{\Tr(K_m)-\lambda_m}{n\,\lambda_m}}.
\] 
Here, $H_m$ is the $H$-orthogonal complement of $V_m$,
$P_m$ is the $H$-orthogonal projection onto $V_m$,
and $K_m$ is the reproducing kernel of $(H_m,\|\cdot\|_H)$.
\end{coro}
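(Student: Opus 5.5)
We will apply Corollary~\ref{coro:main} with the finite-dimensional space $V_m$ for the lower bound and the reproducing kernel Hilbert space $(H_m,\|\cdot\|_H)$ for the upper bound. First we check that $H_m$ satisfies the hypotheses of Corollary~\ref{coro:main}. As a closed subspace of $H$, it is again a reproducing kernel Hilbert space, with kernel $K_m(x,y)=K(x,y)-K_{V_m}(x,y)$, where $K_{V_m}$ is the kernel of $(V_m,\|\cdot\|_H)$. Its trace is therefore bounded by $\Tr(K)<\infty$. Injectivity and continuity are inherited from $H$, and the constant in \eqref{cond:continuous} for $H_m$ is exactly $\lambda_m$. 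Corollary~\ref{coro:main} then gives points $x_1,\dots,x_n$ and weights $w_i>0$ such that
\[
(1-r)\,\|g\|_2\le \|g\|_{w}:=\Big(\sum_{i} w_i|g(x_i)|^2\Big)^{1/2}
\quad\text{for all } g\in V_m,
\]
and
\[
\|h\|_{w}\le (1+s)\sqrt{\lambda_m}\,\|h\|_H
\quad\text{for all } h\in H_m,
\]
since $\frac{M-1}{n}=\frac{\Tr(K_m)-\lambda_m}{n\lambda_m}=s^2$. (If $\lambda_m=0$, then $H_m=\{0\}$ by injectivity, so $f=P_mf$ is reproduced exactly and the claim is trivial.) Because $r<1$ for $n\ge m$, the lower bound makes $\|\cdot\|_w$ a norm on $V_m$, so the minimizer in \eqref{def:wls} is unique.

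Next comes the standard least-squares error decomposition. The map $f\mapsto\tilde f$ is the $\|\cdot\|_w$-orthogonal projection $Q$ onto $V_m$, restricted to sample data, and it reproduces $V_m$. Write $h:=f-P_mf\in H_m$. Then $f-\tilde f=h-Qh$, and hence
\[
\|f-\tilde f\|_2\le \|h\|_2+\|Qh\|_2 .
\]
We bound the two terms separately:
\[
\|h\|_2\le\sqrt{\lambda_m}\,\|h\|_H
\]
by the definition of $\lambda_m$, and, using $Qh\in V_m$, the lower discretization bound, and the fact that $Q$ is a contraction in the $w$-seminorm,
\[
\|Qh\|_2\le \frac{\|Qh\|_w}{1-r}\le\frac{\|h\|_w}{1-r}\le \frac{1+s}{1-r}\sqrt{\lambda_m}\,\|h\|_H .
\]
Adding these gives the claim.

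The only genuinely delicate step is the first one: checking that $H_m$ satisfies all the hypotheses of Corollary~\ref{coro:main}, and identifying its kernel $K_m$ and maximal eigenvalue $\lambda_m$ so that the effective dimension $M=\Tr(K_m)/\lambda_m$ produces precisely $s$. Finite trace comes from $K_m(x,x)\le K(x,x)$, which holds because $K(x,x)$ is the squared norm of point evaluation and restricting to a subspace can only decrease it. Everything after that is routine.
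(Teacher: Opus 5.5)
Your proposal is correct and follows essentially the same route as the paper. You apply Corollary~\ref{coro:main} to $V_m$ and $(H_m,\|\cdot\|_H)$, which gives $\frac{M-1}{n}=s^2$, and write $f-\tilde f=g-\tilde g$ with $g=f-P_mf\in H_m$. You then combine $\|g\|_2\le\sqrt{\lambda_m}\|g\|_H$ with the chain $(1-r)\|\tilde g\|_2\le\|\tilde g\|_w\le\|g\|_w\le(1+s)\sqrt{\lambda_m}\|g\|_H$. Your explicit checks that $H_m$ inherits the hypotheses (via $K_m(x,x)\le K(x,x)$), that the minimizer is unique, and of the degenerate case $\lambda_m=0$ are details the paper leaves implicit.
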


\begin{proof}
Applying Corollary~\ref{coro:main} to the spaces $V_m$ and $H_m$ gives points and weights such that
\[
(1-r)\|\tilde g\|_2
\leq \sqrt{\sum_{i=1}^n w_i |\tilde g(x_i)|^2}
\leq \sqrt{\sum_{i=1}^n w_i |g(x_i)|^2}\leq (1+s)\sqrt{\lambda_m}\,\|g\|_{H},
\]
where $\tilde g\in V_m$ is the weighted least-squares projection of ${g=f-P_mf\in H_m}$.
As this projection is linear and sends $P_mf\in V_m$ to itself, $\tilde g=\tilde f-P_mf$. Hence
\[
\|f-\tilde f\|_2=\|g-\tilde g\|_2\leq \|g\|_2+\|\tilde g\|_2\leq \left(1+\frac{1+s}{1-r}\right)\sqrt{\lambda_m}\,\|g\|_{H}. \qedhere
\]
\end{proof}

In the case of noisy measurements $y_i=f(x_i)+e_i$ for arbitrary $e_i\in \C$, the following extension holds under additional assumptions.

\begin{coro}
\label{coro:noisy-recovery}
Assume that $(D,\mu)$ is a probability space, that $H$ satisfies conditions \eqref{cond:continuous}, \eqref{cond:injective} and \eqref{eq:finite-trace}, and that $V_m \subset H$ has dimension $m$. 
Moreover, assume that 
$V_m$ contains the Riesz representer of the integral, i.e., 
\[
h=\int K(x,\cdot)\,d\mu(x) \,\in V_m.
\]
Then, for $n \ge m$, 
there are points
$x_1,\hdots,x_n \in D$
and weights $w_1,\hdots,w_n>0$
such that, for any $f\in H$ and $e\in \C^n$, the least squares approximation \eqref{def:wls} on $V_m$ with noisy measurements $y_i=f(x_i)+e_i$ satisfies
\begin{equation}
\label{eq-prop-noisy}
\big\|f- \tilde f\big\|_2 
\,\leq\, 
\frac{1+\tilde s}{1-r}\, \Big(2\,\sqrt{\lambda_m} \,\big\|f-P_m f\big\|_H
+\,\Vert e \Vert_{\infty}\Big),
\end{equation}
with $\lambda_m$, $K_m$, $P_m$ and $r$ as in Corollary~\ref{cor:recovery}, 
and $\tilde s=\sqrt{\frac{\Tr(K_m)}{n\,\lambda_m}}$.
\end{coro}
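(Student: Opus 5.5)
The plan is to mimic the proof of Proposition~\ref{rk:weight_control}. We apply Corollary~\ref{coro:main} to $V_m$ and the enlarged space $\widetilde H_m := H_m\oplus\Span\{1\}$, equipped with the norm $\|g+z\|_{\widetilde H_m}^2=\|g\|_H^2+|z|^2/\lambda_m$.

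The first step is to check that $\widetilde H_m$ fits the assumptions of Corollary~\ref{coro:main}, with largest eigenvalue $\lambda_m$ and kernel $K_m+\lambda_m$. This needs $1$ to be $L_2$-orthogonal to $H_m$. Here we use the assumption $h\in V_m$: for $g\in H_m$ we have $\int g\,d\mu=\langle g,h\rangle_H=0$, because $g\perp_H V_m\ni h$. Hence $\|g+z\|_2^2=\|g\|_2^2+|z|^2\le\lambda_m\|g+z\|_{\widetilde H_m}^2$, with equality for constants. The trace is $\Tr(K_m)+\lambda_m$, since $\mu$ is a probability measure. The effective dimension minus one is therefore $\Tr(K_m)/\lambda_m$, and the resulting upper constant is $(1+\tilde s)\sqrt{\lambda_m}$. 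The injectivity condition for $\widetilde H_m$ also follows from this orthogonality. If $1\in H_m$ already (degenerate case), it is simpler to use $H_m$ directly.

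We then obtain points and weights with the lower bound $(1-r)\|v\|_2\le\|v\|_w$ for $v\in V_m$, where $\|u\|_w^2:=\sum_i w_i|u(x_i)|^2$. On the other side, $\|g\|_w\le(1+\tilde s)\sqrt{\lambda_m}\|g\|_H$ for $g\in H_m$. Applying the upper bound to $u=1$ gives $\sqrt{\sum_i w_i}\le(1+\tilde s)\sqrt{\lambda_m}\,\|1\|_{\widetilde H_m}=1+\tilde s$.

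For the error estimate, write $g=f-P_mf\in H_m$, and let $Q$ denote the weighted least-squares map from data to $V_m$. $Q$ is linear, reproduces $V_m$, and is the $\|\cdot\|_w$-orthogonal projection, so $\|Qy\|_w\le\|y\|_w$ in the discrete seminorm. Then
\[
f-\tilde f=g-Q\big(g(x)+e\big).
\]
We bound $\|g\|_2\le\sqrt{\lambda_m}\|g\|_H$, and
\[
\|Q(g(x)+e)\|_2\le\frac{1}{1-r}\big(\|g\|_w+\|e\|_w\big)\le\frac{1}{1-r}\Big((1+\tilde s)\sqrt{\lambda_m}\|g\|_H+(1+\tilde s)\|e\|_\infty\Big),
\]
using $\|e\|_w\le\|e\|_\infty\sqrt{\sum_i w_i}$. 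Combining these and absorbing $\sqrt{\lambda_m}\|g\|_H\le\frac{1+\tilde s}{1-r}\sqrt{\lambda_m}\|g\|_H$ yields the factor $2$ in \eqref{eq-prop-noisy}.

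The main obstacle is the first step. The key observation is that $h\in V_m$ makes $H_m$ mean-zero, which is exactly the hypothesis of Proposition~\ref{rk:weight_control}. It also explains why the paper replaces $M-1$ by $M$: the extra constant function raises the trace by $\lambda_m$. The remaining steps are routine.
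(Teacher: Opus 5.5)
Your proof is correct and follows essentially the same route as the paper, which likewise uses $h\in V_m$ to show that $H_m$ has mean zero and then invokes Proposition~\ref{rk:weight_control} for $H_m$; that proposition's proof is exactly the $H_m\oplus\Span\{1\}$ construction you carry out inline. The only cosmetic difference is that the paper splits the error into an exact-data part and a noise part instead of using one triangle inequality in the discrete seminorm, and your ``degenerate case'' $1\in H_m$ cannot occur, since $\int 1\,d\mu=1$ while every element of $H_m$ has mean zero.
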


\begin{proof}
Observe that $h\in H$ with $\|h\|_H\leq\sqrt{\Tr(K)}$. 
Moreover, for any function $f$ in $H_m$, the $H$-orthogonal complement of $V_m$, it holds
\[
0 = \<f,h\>_H = \int \<f,K(x,\cdot)\>_H\,d\mu(x) = \int f(x)\,d\mu(x).
\]
Replacing Corollary~\ref{coro:main} by Proposition~\ref{rk:weight_control} in the proof of Corollary~\ref{cor:recovery},
the weighted least squares projection $\tilde f$ of the noisy measurements $y_i=f(x_i)+e_i$ onto $V_m$ satisfies \eqref{eq-prop-noisy}.
Indeed, letting $\hat{f}$ be the solution for exact data $f(x_i)$, we bound 
$\Vert f - \hat{f}\Vert_2$ as in Corollary~\ref{coro:noisy-recovery}
(with $s$ replaced by $\tilde s$) and control the norm of 
$\hat{f} - \tilde f \in V_m$ via
\[
 (1-r)\Vert \hat{f} - \tilde f \Vert_2 
 \le \sqrt{\sum_{i=1}^n w_i |(\hat{f} - \tilde f)(x_i)|^2}
 \le \sqrt{\sum_{i=1}^n w_i |e_i|^2}
 \le (1+\tilde s) \Vert e \Vert_\infty,
\]
where the second inequality comes from the fact that
\[
\hat f - \tilde f = \argmin\limits_{g\in V_m} \sum_{i=1}^n w_i |e_i - g(x_i)|^2,
\]
which holds by linearity of \eqref{def:wls} as a function of $y \in \C^m$.
\end{proof}

\medskip

In some cases, the function $h$ in Corollary~\ref{coro:noisy-recovery} is naturally included in~$V_m$.
For example, if $D$ is a group and
both the kernel~$K$ and the measure~$\mu$ are 
translation-invariant, then~$h$ is a constant function. 
Without the assumption $h\in V_m$, we still have $h\in H$ and hence
we can simply use $\widetilde V_m := V_m\oplus \Span\{h\}$ as approximation space.
In this case, $\lambda_m$, $K_m$ and $P_m$ have to be used relative to $\widetilde V_m$, and $r$ is replaced by $\sqrt{m/n}$.

\medskip

We now state implications for the (linear) \emph{sampling numbers} of a class $F \subset Y$
in a (semi-)normed function space $Y \subset \C^D$, which are defined by 
\[
g_n^{\rm lin}(F,Y) 
\,:=\, 
\inf_{\substack{x_1,\dots,x_n\in D\\ \phi_1,\dots,\phi_n\in Y}}\, 
\sup_{f\in F}\, \Big\|f - \sum_{i=1}^n f(x_i) \, \phi_i\Big\|_{Y}.
\]
They measure the minimal worst-case error
of linear approximation algorithms that use at most $n$ samples.

Our result is directly applicable to sampling numbers in $Y=L_2$ of unit balls $F=B_H$ in a RKHS $H$.
It is obtained from Corollary~\ref{cor:recovery} on least squares approximation
by choosing the approximation space $V_m$ optimally.

\begin{coro}\label{cor:sampling-numbers-H}
Let $(D,\mu)$ be a measure space 
and $H$ a reproducing kernel Hilbert space 
satisfying \eqref{cond:continuous}, \eqref{cond:injective} and \eqref{eq:finite-trace}.
Then, for all $m\in \N$ and $n\geq m$,
\[
g_n(B_H,L_2) 
\,\leq\, \left(1+\frac{1}{1-r}\right)
\left(\sigma_m + \sqrt{\frac{1}{n}\sum_{k> m} \sigma_k^2} \right),
\]
where $r=\sqrt\frac{m-1}{n}$ and $\sigma_0 \geq \sigma_1 \geq \hdots$ are the singular values
of the embedding $H\hookrightarrow L_2$.
\end{coro}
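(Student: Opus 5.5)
We obtain this from Corollary~\ref{cor:recovery} by choosing $V_m$ as the span of the first $m$ singular vectors of the embedding. Let $(b_k)_{k\ge 1}$ be the left singular vectors of $H\hookrightarrow L_2$. These form an orthonormal system in $H$, orthogonal in $L_2$, with $\|b_k\|_2=\sigma_k$, indexed so that $\sigma_1\ge\sigma_2\ge\dots$. (The paper's indexing starts at $\sigma_0$; we align it so that $V_m$ has dimension $m$ and the remaining largest singular value is $\sigma_m$, shifting by one if needed.) Set $V_m:=\Span\{b_1,\dots,b_m\}\subset H$.

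\textbf{Computing the quantities in Corollary~\ref{cor:recovery}.} By injectivity~\eqref{cond:injective}, the $H$-orthogonal complement $H_m$ is the closed span of $(b_k)_{k>m}$. Expanding $g=\sum_{k>m}c_kb_k$ gives
\[
\|g\|_2^2=\sum_{k>m}|c_k|^2\sigma_k^2 \quad\text{and}\quad \|g\|_H^2=\sum_{k>m}|c_k|^2 .
\]
Hence $\lambda_m=\sigma_{m+1}^2$, which is $\sigma_m^2$ in the paper's indexing. The kernel of $H_m$ is $K_m(x,y)=\sum_{k>m}b_k(x)\overline{b_k(y)}$, so
\[
\Tr(K_m)=\sum_{k>m}\|b_k\|_2^2=\sum_{k>m}\sigma_k^2<\infty
\]
by~\eqref{eq:finite-trace}. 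If $\lambda_m=0$, then $H_m=\{0\}$ because the embedding is injective, so $f=P_mf$ and the bound is trivial. We therefore assume $\lambda_m>0$.

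\textbf{Applying the bound.} Corollary~\ref{cor:recovery} produces points and weights, and the resulting weighted least squares operator $f\mapsto\tilde f=\sum_i f(x_i)\phi_i$ is linear with $\phi_i\in V_m\subset L_2$. It is therefore admissible in the definition of $g_n^{\rm lin}$. For $f\in B_H$ we have $\|f-P_mf\|_H\le\|f\|_H\le 1$. With $1+s\le 1+\sqrt{\Tr(K_m)/(n\lambda_m)}$,
\[
\Big(1+\frac{1+s}{1-r}\Big)\sqrt{\lambda_m}
\le \sqrt{\lambda_m}+\frac{1}{1-r}\Big(\sqrt{\lambda_m}+\sqrt{\tfrac1n\textstyle\sum_{k>m}\sigma_k^2}\Big).
\]
Since $\sqrt{\lambda_m}\le \sqrt{\lambda_m}+\sqrt{\tfrac1n\sum_{k>m}\sigma_k^2}$, the right-hand side is at most
\[
\Big(1+\frac{1}{1-r}\Big)\Big(\sigma_m+\sqrt{\tfrac1n\textstyle\sum_{k>m}\sigma_k^2}\Big),
\]
which is the claim.

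\textbf{Main obstacle.} The only delicate step is the bookkeeping: the indexing convention and the identification of $\lambda_m$ and $\Tr(K_m)$ via the singular value decomposition. This uses the Hilbert--Schmidt property, which guarantees that the singular vectors exist and that the trace identity holds.
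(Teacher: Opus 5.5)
Your route is the paper's: apply Corollary~\ref{cor:recovery} with $V_m$ spanned by the top $m$ singular functions, then read off $\lambda_m$ and $\Tr(K_m)$. The functional-analytic parts are fine: injectivity gives $H_m=\overline{\Span}\{b_k\}_{k>m}$, least squares is a linear sampling algorithm, $\|f-P_mf\|_H\le 1$, and the degenerate case is handled.

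\textbf{The gap.} The final inequality does not follow as written, because of an index slip at exactly the step you flagged as delicate. You bound $s\le\sqrt{\Tr(K_m)/(n\lambda_m)}$, which discards the $-\lambda_m$ in the definition of $s$. The tail term you obtain is therefore $\sqrt{\Tr(K_m)/n}$. In your shifted indexing $\Tr(K_m)=\sum_{k>m}\sigma_k^2$, which is $\sum_{k\ge m}\sigma_k^2$ in the statement's indexing ($\sigma_0\ge\sigma_1\ge\dots$), so it still contains $\sigma_m^2=\lambda_m$. In your last display you write $\sigma_m$ in the statement's convention for $\sqrt{\lambda_m}$, but you keep the tail sum in your shifted convention. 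Read consistently, what you have proved is
\[
g_n(B_H,L_2)\le\Big(1+\frac{1}{1-r}\Big)\Big(\sigma_m+\sqrt{\tfrac1n\textstyle\sum_{k\ge m}\sigma_k^2}\Big).
\]
This is strictly weaker than the claim whenever $\sigma_m>0$.

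\textbf{The fix.} The repair is one line, and it is what the paper does. Fix the statement's indexing from the start: take $V_m$ spanned by the singular functions for $\sigma_0,\dots,\sigma_{m-1}$, so that $\lambda_m=\sigma_m^2$ and $\Tr(K_m)=\sum_{k\ge m}\sigma_k^2$. Then keep $s$ exactly as defined:
\[
\sqrt{\lambda_m}\,s=\sqrt{\frac{\Tr(K_m)-\lambda_m}{n}}=\sqrt{\frac1n\sum_{k>m}\sigma_k^2}.
\]
This gives $\big(1+\frac{1+s}{1-r}\big)\sqrt{\lambda_m}=\sigma_m+\frac{1}{1-r}\big(\sigma_m+\sqrt{\frac1n\sum_{k>m}\sigma_k^2}\big)$, which yields the stated bound. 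The subtraction of $\lambda_m$ is the $M-1$ (instead of $M$) refinement of Theorem~\ref{thm:main}, carried through Corollary~\ref{coro:main}. It is precisely what removes $\sigma_m^2$ from the tail, so it must not be discarded.
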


\begin{proof}
  We use Corollary~\ref{cor:recovery}
  for the space $V_m$ spanned by the $m$ left singular functions having
  the $m$ largest singular values $\sigma_0,\hdots,\sigma_{m-1}$.
  In this case, 
  we have $\lambda_m=\sigma_m^2$ and $s=\sqrt{\frac1{n\,\sigma_m^2} \sum_{k> m} \sigma_k^2}$. For $f\in B_H$, as $\big\|f-P_m f\big\|_H\leq 1$, we obtain
  \[
    \big\|f- \tilde f\big\|_2 
    \,\leq\, \sigma_m+\frac{1}{1-r}\left(\sigma_m + \sqrt{\frac{1}{n}\sum_{k> m} \sigma_k^2} \right).
  \]
  Noting that the least squares approximation $\tilde f$ can be written as a linear sampling algorithm $\tilde f = \sum_{i=1}^n f(x_i) 
\phi_i$, the result is proved.
\end{proof}

The singular numbers of the embedding $H\hookrightarrow L_2$
are equal to the approximation numbers, 
Kolmogorov numbers, Gelfand numbers, and all other s-numbers.
In this sense, Corollary~\ref{cor:sampling-numbers-H} serves as a comparison 
of the power of linear sampling algorithms
with more general algorithms and other approximation benchmarks, see~\cite[Sec.\,10.1]{KU-acta} for a comprehensive treatment.

For convenience, we state the special cases $n=m$ and $n=2m$:
\begin{equation}\label{eq:n-is-m}
g_m(B_H,L_2) 
\,\leq\, (2 m+1) \left(\sigma_m + \sqrt{\frac{1}{m}\sum_{k> m}\sigma_k^2} \right).
\end{equation}
and 
\begin{equation}\label{eq:n-is-2m}
g_{2m}(B_H,L_2) 
\,\leq\, 5
\left(\sigma_m + \sqrt{\frac{1}{2m}\sum_{k> m} \sigma_k^2} \right).
\end{equation}

\smallskip

\begin{remark}[Comparison with earlier results]
Corollary~\ref{cor:sampling-numbers-H} improves on \cite[Theorem~23]{DKU}, 
by providing a constructive approach and reducing the constants.
Notably, \cite{DKU} requires an oversampling factor $n/m \geq 10^7$,
while we get the same bound already for $n/m=2$,
and reasonable results even for $n=m$.
Another improvement is that the underlying algorithm allows general spaces $V_m$. 
The corollary can also be applied to improve the bounds in \cite{KPUU}
on approximation in the $L_p$-norm, 
and answers the open problems from \cite[Remark~21]{DKU} and \cite[Remark~2]{KPUU}. 
It can also be seen as an improvement on~\cite{BSU}, 
which uses the constructive framework of~\cite{BSS} 
but still has a logarithmic factor in the error bound.
\end{remark}

Let us note that we can use Corollary~\ref{cor:sampling-numbers-H}
to improve the constants in various bounds on the sampling numbers
also for general non-Hilbert function classes $F$
from \cite{DKU,KPUU,KU2}.
We omit the details here, and refer to
Theorem~\ref{thm:constructive}
for a constructive version 
that does not require the Hilbert space structure.

\smallskip

\begin{remark}[Exponential decay]
The formula \eqref{eq:n-is-m} reveals another advantage over \cite[Theorem~23]{DKU}. 
Namely, if we have singular values with exponential or faster decay (e.g., for the Gaussian kernel), then the improvement is in a certain sense more than a constant since we can put $n=m$. 
For instance, if $\sigma_k=\alpha^k$ with $\alpha \in (0,1)$, 
\cite{DKU} gives an upper bound of the form $\beta^n$ for the $n$-th sampling number with some $\alpha<\beta<1$. 
The new bound~\eqref{eq:n-is-m} gives a bound of order $n\alpha^n$, which is better. See~\cite{KarvonenSuzuki}
for an example where this could be applied.
\end{remark}

\smallskip

\begin{remark}[RKHS assumption]
The above results are formulated for reproducing kernel Hilbert spaces~$H$ to save some technicalities, 
but they also hold for separable Hilbert spaces $H$ such that $H\hookrightarrow L_2$ is Hilbert-Schmidt. The latter assumption 
already implies that $H$ is \emph{almost} a RKHS in the following sense: 
If $H\hookrightarrow L_2$ is Hilbert-Schmidt, then every $f\in H$ has an a.e.-convergent Fourier series (w.r.t.~every $H$-orthonormal basis). 
Hence, if $H$ is separable, then there is a countable dense subset $H_0\subset H$ and $\mu$-null set $D_0\subset D$ 
such that every $f\in H_0$ restricted $D\setminus D_0$ has everywhere convergent series, 
and we can \emph{work} with such functions as if we had a RKHS with finite trace.
Since the set of acceptable points in our construction has positive measure, see Lemma~\ref{lemboth}, we can pick the sampling points in the ``good'' set $D \setminus D_0$.
By continuity of $H\hookrightarrow L_2$, the result then holds for all of~$H$.
See \cite[Sec.\,4]{MU} or \cite[Sec.\,2]{DKU} for details.
\end{remark}

\medskip

Finally,
we can use Proposition~\ref{coro:intro-equal}
on discretization with equal weights
to obtain error bounds for plain least squares approximation 
in terms of best uniform approximation.
This has been done before in many papers,
for instance \cite{BSU,ChkifaDolbeault24,KPUU25-uniform,PU,T20}.
Proposition~\ref{coro:intro-equal} can further improve upon
the corresponding results.
For the points $x_i \in D$ 
and data $y_i\in\C$, $1\leq i\leq 2n$,
we now consider the plain least squares approximation
\begin{equation}\label{def:pls}
\tilde f := \argmin_{g\in V_m} \sum_{i=1}^{2n} |y_i -g(x_i)|^2.
\end{equation}

\begin{coro}\label{cor:recovery-unif}
Let $(D,\mu)$ be a probability space, $V_m\subset L_2(D,\mu)\cap B(D)$ with $\dim(V_m)=m$, 
and $2\le p\le\infty$. 
For all $n> m$,
there are points $x_1,\hdots,x_{2n} \in D$
such that the plain least squares 
approximation~\eqref{def:pls} 
with $y_i = f(x_i) + e_i$
satisfies
\[
\big\|f- \tilde f\big\|_p 
\,\leq\, \left(1 + \sqrt{2}\cdot\frac{m^{1/2-1/p}}{1-\sqrt{m/n}}\right) \Big( \min_{g\in V_m} \,\big\|f-g\big\|_\infty + \Vert e \Vert_\infty \Big)
\]
for all $f\in B(D)$ and any noise $e_1,\hdots,e_n \in \C$.
For $p\in\{2,\infty\}$, we can use $n$ instead of $2n$ points, remove the factor $\sqrt{2}$, 
and it suffices to have $V_m \subset L_2$ or $V_m \subset B(D)$, respectively.
\end{coro}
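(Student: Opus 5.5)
We take the points $x_1,\dots,x_{2n}$ from Proposition~\ref{coro:intro-equal}, applied to $V_m$ with the given $n$. Then every $g\in V_m$ satisfies
\[
\left(1-\sqrt{\tfrac{m}{n}}\right)\|g\|_p \le m^{\frac12-\frac1p}\sqrt{\tfrac1n\textstyle\sum_{i=1}^{2n}|g(x_i)|^2}.
\]
Since $n>m$, the factor $1-\sqrt{m/n}$ is positive. Taking $p=2$ shows that the sampling map $g\mapsto(g(x_i))_i$ is injective on $V_m$, so the minimizer in \eqref{def:pls} is unique. The map $y\mapsto\tilde f$ is then linear: it is the orthogonal projection in $\ell_2^{2n}$ onto the image of $V_m$, composed with the inverse of the sampling map. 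In particular this projection has norm at most $1$ in the discrete $\ell_2$ sense.

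Fix $f\in B(D)$ and let $g^*\in V_m$ be a best uniform approximation; if the minimum is not attained, use a near-minimizer and pass to the limit. Write $\tilde f=g^*+\tilde u$, where $\tilde u\in V_m$ is the least squares fit to the data $u_i:=f(x_i)-g^*(x_i)+e_i$. This uses that $g^*$ is reproduced exactly and that the fit is linear. Then
\[
\|f-\tilde f\|_p\le\|f-g^*\|_p+\|\tilde u\|_p\le \|f-g^*\|_\infty+\|\tilde u\|_p,
\]
where the last step uses $\|\cdot\|_p\le\|\cdot\|_\infty$ on a probability space. For the second term, combine the discretization inequality with the projection bound:
\[
\left(1-\sqrt{\tfrac mn}\right)\|\tilde u\|_p\le m^{\frac12-\frac1p}\sqrt{\tfrac1n\textstyle\sum_{i=1}^{2n}|\tilde u(x_i)|^2}\le m^{\frac12-\frac1p}\sqrt{\tfrac1n\textstyle\sum_{i=1}^{2n}|u_i|^2}\le \sqrt2\,m^{\frac12-\frac1p}\big(\|f-g^*\|_\infty+\|e\|_\infty\big).
\]
Adding the two estimates and bounding $\|f-g^*\|_\infty$ by $\|f-g^*\|_\infty+\|e\|_\infty$ gives the claimed inequality.

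For $p\in\{2,\infty\}$, Proposition~\ref{coro:intro-equal} provides $n$ points instead of $2n$. The average $\frac1n\sum_{i=1}^n|u_i|^2$ is then bounded by $(\|f-g^*\|_\infty+\|e\|_\infty)^2$, so the factor $\sqrt2$ disappears.

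The reduced assumptions in these two cases need a little care; this is the only delicate point. For $p=2$ with only $V_m\subset L_2$, Proposition~\ref{coro:intro-equal} still applies, as noted after its statement. The first term $\|f-g^*\|_2\le\|f-g^*\|_\infty$ is fine, since $f-g^*$ is bounded whenever the minimum is finite. For $p=\infty$ with $V_m\subset B(D)$, the term $\|\cdot\|_\infty$ should be read as the supremum norm, and the same chain of inequalities applies verbatim.
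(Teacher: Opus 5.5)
Your proof is correct and follows essentially the same argument as the paper: take the points from Proposition~\ref{coro:intro-equal}, use that the least-squares map is linear and reproduces $V_m$ to reduce to the residual $f-g$ (plus noise), bound the discrete norm of the fitted part by that of the data, and conclude with the triangle inequality and $\|\cdot\|_p\le\|\cdot\|_\infty$ on a probability space. The only cosmetic difference is that you fix a (near-)best uniform approximation $g^*$ up front, where the paper works with arbitrary $g\in V_m$ and takes the infimum at the end.
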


We remark that, in general, the factor $m^{1/2-1/p}$ cannot be avoided,
see the discussion in \cite[Sec.\,5.3]{KPUU25-uniform}.

\begin{proof}
For all $g\in V_m$, the plain least-squares projection of $h=f-g$ from noisy measurements $h(x_i)+e_i$ is $\tilde h=\tilde f-g$.
Applying Proposition~\ref{coro:intro-equal} 
for the space $V_m$
and using the corresponding points $x_1,\hdots,x_{2n}$, we get
\begin{align*}
m^{\frac{1}{p}-\frac{1}{2}}\left(1-\sqrt{\frac{m}{n}}\right)\|\tilde h\|_p
&\leq \sqrt{\frac{1}{n}\sum_{i=1}^{2n} |\tilde h(x_i)|^2}\\
&\leq \sqrt{\frac{1}{n}\sum_{i=1}^{2n} |h(x_i)+e_i|^2}\leq \sqrt{2}\,(\|h\|_{\infty}+\|e\|_\infty).
\end{align*}
As $\mu$ is a probability measure, we also have $\|h\|_p\leq \|h\|_\infty$, which yields
\[
\|f-\tilde f\|_p = \|h-\tilde h\|_p \leq \|h\|_p+\|\tilde h\|_p
\leq \|h\|_\infty +\|\tilde h\|_p.
\]
It only remains to combine the two previous estimates and take the infimum over $g\in V_m$.
\end{proof}

In the same way we deduced Corollary~\ref{cor:sampling-numbers-H} from Corollary~\ref{cor:recovery}, we can use Corollary~\ref{cor:recovery-unif} to bound sampling numbers in $L_p$-spaces by Kolmogorov or Gelfand widths similar to \cite[Remark 1.5]{ChkifaDolbeault24} and \cite[Theorem 6]{KPUU}. We refer to \cite[Sec.\,3.2]{KU-acta} for a detailed discussion.

\section{Proofs of Theorem~\ref{thm:main} and Proposition~\ref{coro:intro-equal}}
\label{sec:proof}

\subsection{Recap on linear algebra in infinite dimension}

For convenience, before we start with the proof of the main result,
we collect a few properties for the computation with bounded operators on $\ell_2(\II)$, which we see as potentially infinite matrices $A = (A_{ij})_{i,j \in \II} \in \C^{\II \times \II}$.
We refer to \cite[Chapter VI]{reed} for a comprehensive treatment.
Consider the class of
positive definite invertible operators
\[
\mathcal P=\Big\{A\in\C^{\II \times \II}\colon A^*=A,\;\sup_{\|x\|_{\ell_2(\II)}=1} x^*Ax< \infty\quad\text{and}\quad \inf_{\|x\|_{\ell_2(\II)}=1} x^*Ax>0\Big\}.
\] 
In finite dimension, we denote by $\mathcal P_m$ the set of positive definite matrices in $\C^{m\times m}$, and recall that elements of $\mathcal P_m$ or $\mathcal P$ have an inverse in $\mathcal P_m$ or $\mathcal P$.

For self-adjoint operators $A$ and $B$, we write $A\preccurlyeq B$ (or $B\succcurlyeq A$) if $B-A$ is positive semi-definite
and $A\prec B$ (or $B\succ A$) if $B-A\in \mathcal P$. Clearly, $A\prec B$ implies $A\preccurlyeq B$.
Then, the following properties hold for bounded operators $A$ and $B$:
\begin{enumerate}[(i)]
    \item \label{property:pd} If $A$ is self-adjoint and $B\succcurlyeq 0$, then $ABA\succcurlyeq 0$.
    \item\label{property:trace-positive} If $A,B\succcurlyeq 0$ and $A$ has finite trace, then $AB$ has finite trace $\Tr(AB) \ge 0$.\\
    Moreover, if $A$ and $B$ are positive definite, $\Tr(AB)> 0$.
    \item  \label{property:SM} Shermann-Morrison formula: 
    If $A\in \mathcal P$, $a\in \ell_2(\mathbb I)$ and $w\in\R$ are such that $1+wa^*A^{-1}a > 0$,
    then $A+waa^*\in \mathcal P$ with
    \[
     (A+waa^*)^{-1} = A^{-1} - \frac{wA^{-1}aa^*A^{-1}}{1+wa^*A^{-1}a}.
    \]
\item\label{property:circ}
    If $a,b\in \ell_2(\II)$, then $\Tr(ab^*)=b^*a$.
    \item\label{property:CS} Cauchy-Schwarz inequality: If $A\succcurlyeq 0$ and $B\succcurlyeq 0$ have finite trace, then
    \[
    |\Tr(AB)|^2 \,\leq\, \Tr(A^2) \cdot \Tr(B^2)<\infty.
    \]
\end{enumerate}

\subsection{Proof of Theorem~\ref{thm:main}}
\label{sec:main-proof}

We first note that we may assume without loss of generality 
that $I$ is the identity matrix, as explained in \cite[Proof of Theorem~1.2]{BSS}.
Indeed, if $I$ is not the identity matrix,
we apply the theorem for $\tilde a := I^{-1/2} a $ in place of $a$.
Then $\widetilde I := \int \tilde a(x) \tilde a(x)^* d\mu(x)$ is the identity matrix.
We get points $x_i \in D$ and weights $w_i>0$ such that
\[
\sum_{i=1}^n w_i\tilde a(x_i) \tilde a(x_i)^*
\succcurlyeq
\left(1-\sqrt{\frac{m-1}{n}}\right)^2 \widetilde I.
\]
Multiplying from the left and right by $I^{1/2}$, 
we retrieve the statement by property~\eqref{property:pd}.

The basic idea of the proof is to select points $x$ and weights $w$ one by one,
in such a way that adding the rank-one updates $w a(x) a(x)^*$ and $wb(x)b(x)^*$ to matrices $A\in \mathcal P_m$ and $B\in \mathcal P$
modifies their eigenvalues in a controlled way.
This change is quantified via the \emph{lower} and \emph{upper potentials}
\[
\Phi(A):=\Tr(A^{-1})\quad\text{and}\quad \Psi(B):=\Tr(JB^{-1}).
\]
Note that $\Phi(A)$ and $\Psi(B)$, 
for $A\in \mathcal P_m$ and $B\in \mathcal P$,
are bounded and positive by property~\eqref{property:trace-positive} since 
the positive definite $J$ has finite trace,
\[
Y:=A^{-1}\in \mathcal P_m\quad\text{and}\quad W:=B^{-1}\in \mathcal P.
\]
In fact, we even have 
\[
A \succcurlyeq \Phi(A)^{-1} I
\quad\text{and}\quad
B \succcurlyeq \Psi(B)^{-1} J
\]
for all $A\in \mathcal P_m$ and $B\in \mathcal P$.
To see the second inequality, note that $B^{-1/2}JB^{-1/2} \succcurlyeq 0$ by property~\eqref{property:pd} and its norm is hence smaller than its trace, which is equal to $\Psi(B)$, and therefore $\Psi(B)I-B^{-1/2}JB^{-1/2} \succcurlyeq 0$;
multiplying with $B^{1/2}$ from both sides gives the inequality.

Before each step, we update $A$ and $B$ by \emph{increments} $\delta,\zeta>0$:
assuming that $A\succ \delta I$, we denote
\[
Z:=(A- \delta I)^{-1}\in \mathcal P_m \quad\text{and}\quad X:=(B+\zeta J)^{-1}\in \mathcal P,
\]
which satisfy $Z\succ Y$ and $X\prec W$. Taking traces, we obtain
\begin{equation}
\label{eq_comp_tr}
\Tr(Z)>\Tr(Y)=\Phi(A)
\quad\text{and}\quad
\Tr(JX)<\Tr(JW)=\Psi(B)
\end{equation}
by applying again property~\eqref{property:trace-positive}.
With this, we can define the \emph{lower verifier} 
\[
    L_A^{\delta}(a)
     \,:=\,\frac{a^*Z^2 a}{\Tr(Z-Y)}-a^*Za
\]
and \emph{upper verifier}
\[
U_B^{\zeta}(b)
    :=\frac{b^*X J Xb}{\Tr(JW-JX)}+b^*X b
     \ge 0, 
\]
which will be used to check 
whether the potentials of $A$ and $B$ increase when adding rank-one updates of the form $waa^*$ and $-w b  b^*$ and increments $-\delta I$ and $\zeta J$.
This is expressed in the following lemma. 

\smallskip

\begin{lemma}
\label{lemboth}
Let $A\in \mathcal P_m$, $B\in \mathcal P$,
$I\in\mathcal P_m$ be the identity, 
and $J\in\C^{\II\times\II}$ be positive definite with finite trace.
For $a\in \C^m$, $b\in\ell_2(\II)$ and $w>0$, 
and for $0<\delta<\Phi(A)^{-1}$ and $\zeta>0$,
consider the updated operators
\begin{equation*}
A'=A-\delta I+waa^*\quad\text{and}\quad B'=B+\zeta J-wbb^*.
\end{equation*}
\begin{enumerate}[a)]
    \item If $L_A^\delta(a) \ge \frac1w$,
    then $A' \in \mathcal P_m$ with $\Phi(A')\leq \Phi(A)$.
    \item If $U_B^\zeta(b) \le \frac1w$, then $B'\in \mathcal P$ with $\Psi(B')\leq \Psi(B)$.
\end{enumerate}
\end{lemma}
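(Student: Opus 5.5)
The plan follows the BSS argument: split each update into the deterministic shift and the rank-one update, and use the Sherman–Morrison formula (property~\eqref{property:SM}) for the rank-one part.

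\emph{Part a).} Since $0<\delta<\Phi(A)^{-1}$ and $A\succcurlyeq\Phi(A)^{-1}I$, we get $A-\delta I\succ 0$, so $Z=(A-\delta I)^{-1}\in\mathcal P_m$. Property~\eqref{property:SM} applies to $A-\delta I+waa^*$ because $1+wa^*Za>0$ trivially for $w>0$. Hence $A'\in\mathcal P_m$ and
\[
\Phi(A')=\Tr(Z)-\frac{w\,a^*Z^2a}{1+w\,a^*Za},
\]
using property~\eqref{property:circ} for $\Tr(Zaa^*Z)=a^*Z^2a$. The condition $\Phi(A')\le\Phi(A)=\Tr(Y)$ is therefore equivalent to
\[
\frac{w\,a^*Z^2a}{1+w\,a^*Za}\ge \Tr(Z-Y).
\]
By \eqref{eq_comp_tr}, $\Tr(Z-Y)>0$, and the denominator is positive. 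Rearranging gives $a^*Z^2a/\Tr(Z-Y)-a^*Za\ge 1/w$, which is exactly $L_A^\delta(a)\ge 1/w$.

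\emph{Part b).} This is analogous but needs more care. First, $B+\zeta J\in\mathcal P$ since $B\in\mathcal P$ and $\zeta J\succcurlyeq 0$ is bounded, so $X\in\mathcal P$. For the update $-wbb^*$, Sherman–Morrison requires $1-w\,b^*Xb>0$. This follows from the hypothesis: $U_B^\zeta(b)\le 1/w$ gives $b^*Xb\le 1/w-\frac{b^*XJXb}{\Tr(JW-JX)}$. If $b\neq0$, the subtracted term is strictly positive, because $X$ is injective and $J$ is positive definite, and the denominator is positive by \eqref{eq_comp_tr}. Hence $w\,b^*Xb<1$. The case $b=0$ is trivial. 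Then $B'\in\mathcal P$ and
\[
(B')^{-1}=X+\frac{w\,Xbb^*X}{1-w\,b^*Xb}.
\]
Multiplying by $J$ and taking traces, with property~\eqref{property:circ} giving $\Tr(JXbb^*X)=b^*XJXb$, we obtain
\[
\Psi(B')=\Tr(JX)+\frac{w\,b^*XJXb}{1-w\,b^*Xb}.
\]
The requirement $\Psi(B')\le\Tr(JW)$ becomes $w\,b^*XJXb\le \Tr(JW-JX)(1-w\,b^*Xb)$, which is precisely $U_B^\zeta(b)\le 1/w$.

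\emph{Main obstacle.} The only delicate point is the infinite-dimensional bookkeeping in b). We need the traces $\Tr(JX)$, $\Tr(JW)$ and $\Tr(JXbb^*X)$ to be finite, which follows from property~\eqref{property:trace-positive} since $J$ has finite trace. We also need the Sherman–Morrison inverse to lie in $\mathcal P$, including a strictly positive lower bound, which is guaranteed by the positivity condition just verified. Finally, the traces must be taken of genuinely trace-class products, so that linearity of the trace is justified; here $JXbb^*X$ is rank one.
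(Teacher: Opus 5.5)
Your proposal is correct and follows the paper's proof essentially step by step. In both parts you apply the Sherman--Morrison formula to $Z^{-1}+waa^*$ and to $X^{-1}-wbb^*$, and in b) you derive $1-w\,b^*Xb>0$ from the verifier condition together with positive definiteness of $J$, treating $b=0$ (equivalently $Xb=0$) separately; rearranging the resulting trace identities then gives exactly the verifier inequalities.
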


\begin{proof}
We follow \cite[Lemmas 3.3--3.4]{BSS}. 
We first note that
\[
\lambda_{\min}(A) \geq \Phi(A)^{-1} 
> \delta. 
\]
Hence $A\succ \delta I$ and $Z=(A-\delta I)^{-1}\in \mathcal P_m$.
Since $1+wa^*Za >0$ for any vector $a\in \C^m$ and weight $w>0$,
the Shermann-Morrison formula shows that $A'=Z^{-1}+waa^*\in \mathcal P_m$ with
\begin{equation}
\label{eq:lower-barrier-cond}
\Phi(A')
\overset{\eqref{property:SM}}{=}
\Tr\left(Z-\frac{wZaa^*Z}{1+w a^*Za}\right)\overset{\eqref{property:circ}}{=} \Tr(Z)-\frac{a^*Z^2a}{\frac{1}{w}+a^*Za}.
\end{equation}
Rearranging the terms, we see that $\Phi(A')\leq \Phi(A)$
if and only if the lower verifier satisfies $L_A^\delta(a) \geq \frac{1}{w}$.
 
We turn to the upper potential.
Since $B\in \mathcal P$, $X:=(B+\zeta J)^{-1}\in \mathcal P$.
Let $b\in \ell_2(\mathbb I)$ and $w>0$ such that $U_B^\zeta(b)\leq \frac{1}{w}$. Observing that either $Xb=0$ or
\[
\frac{1}{w}-b^*Xb\geq \frac{b^*X J Xb}{\Tr(JW-JX)}>0
\]
since $J$ is positive definite, we see that $1-wb^*Xb>0$ in any case.
The Sherman-Morrison formula then shows that the operator
$B'=X^{-1}-wbb^*$ belongs to $\mathcal P$, and yields
\[
 \Psi(B')
\overset{\eqref{property:SM}}{=}
\Tr(JX)+\frac{\Tr(JXbb^*X)}{\frac{1}{w}-b^*Xb}
\overset{\eqref{property:circ}}{=} \Tr(JX)+\frac{b^*XJXb}{\frac{1}{w}-b^*Xb}.
\]
Thus, if the upper verifier
satisfies $U_B^\zeta(b)\leq \frac{1}{w}$, we get $\Psi(B')\leq \Psi(B)$.
\end{proof}

We now turn to the $L_2$-framework.
In the rest of this section,
let $(D,\mu)$ be a measure space, and let $a=(a_1,\hdots,a_m)^\top$
and $b= (b_k)_{k\in \mathbb I}^\top$ be families of square-integrable functions,
where $\mathbb I$ is at most countable.
Further assume that 
$I:=\int_D a(x)a(x)^*d\mu(x)$ is the identity 
and that $J:=\int_D b(x)b(x)^*d\mu(x)$ is positive definite and has finite trace.

By Lemma~\ref{lemboth},
the condition $L_A^\delta(a(x)) \ge U_B^\zeta(b(x))$
implies the existence of $w>0$
such that the potentials of the updates
do not increase.
The following lemma shows when this condition can be satisfied.

\begin{lemma} \label{lemma:pos_measure}
For any $A\in \mathcal P_m$ and $0<\delta<\Phi(A)^{-1}$ and
for and any $B\in \mathcal P$ and $\zeta>0$,
it holds that
\[
\int L_A^\delta(a(x))\, d\mu(x) >  \frac{1}{\delta}-\Phi(A)
\quad\text{and}\quad
\int U_B^\zeta(b(x))\, d\mu(x) < \frac{1}{\zeta}+\Psi(B).
\]
In particular, if
\begin{equation}
\label{ineq-lemboth}
\frac{1}{\delta}-\Phi(A)\geq \frac{1}{\zeta}+\Psi(B),
\end{equation}
the set of points $x\in D$ with  
$L_A^\delta(a(x)) \ge U_B^\zeta(b(x))$ has positive measure
and
for any $x \in D$ and $w>0$ with $U_B^\zeta(b(x)) \le \frac1w \le L_A^\delta(a(x))$, the updates
\begin{equation}\label{eq:updates}
A'=A-\delta I+wa(x)a(x)^*\quad\text{and}\quad B'=B+\zeta J-wb(x)b(x)^*
\end{equation}
again satisfy~\eqref{ineq-lemboth}.
\end{lemma}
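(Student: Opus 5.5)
We will compute the two integrals directly using $\int a(x)a(x)^*\,d\mu(x)=I$ and $\int b(x)b(x)^*\,d\mu(x)=J$, together with the trace identity $\int c(x)^*Tc(x)\,d\mu(x)=\Tr(T\int c\,c^*\,d\mu)$. For a bounded $T$ this follows from property~\eqref{property:circ} and exchanging trace and integral; for $b$ in infinite dimension we justify the exchange by monotone convergence, since the integrands below are nonnegative. This yields
\[
\int L_A^\delta(a)\,d\mu=\frac{\Tr(Z^2)}{\Tr(Z-Y)}-\Tr(Z),
\qquad
\int U_B^\zeta(b)\,d\mu=\frac{\Tr(XJXJ)}{\Tr(JW-JX)}+\Tr(XJ).
\]

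\textbf{Lower integral.} The matrices $Y$ and $Z$ commute, since both are functions of $A$. Diagonalize $A$ with eigenvalues $\alpha_j>\delta$, so that $z_j=1/(\alpha_j-\delta)$ and $y_j=1/\alpha_j$, which gives $z_j-y_j=\delta z_j y_j$. We want $\Tr(Z^2)/\Tr(Z-Y)-\Tr(Z)>1/\delta-\Tr(Y)$, i.e.
\[
\Tr(Z^2)>\Big(\tfrac1\delta+\Tr(Z-Y)\Big)\Tr(Z-Y).
\]
We follow BSS Lemma 3.5. Note that $z_j^2=(z_j-y_j)\bigl(z_j/(\delta y_j)\bigr)$ and $z_j/y_j=\alpha_j/(\alpha_j-\delta)=1+\delta z_j$. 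Hence $\Tr(Z^2)=\sum_j (z_j-y_j)\bigl(\tfrac1\delta+z_j\bigr)=\tfrac1\delta\Tr(Z-Y)+\sum_j (z_j-y_j)z_j$. It then suffices to show
\[
\sum_j (z_j-y_j)z_j>\Bigl(\sum_j(z_j-y_j)\Bigr)^2 .
\]
Write $z_j-y_j=\delta z_jy_j$. By Cauchy--Schwarz, $\bigl(\sum_j \delta z_jy_j\bigr)^2\le \bigl(\sum_j \delta z_j^2 y_j\bigr)\bigl(\sum_j\delta y_j\bigr)$, and $\delta\sum_j y_j=\delta\Phi(A)<1$. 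This gives the strict inequality, since the first factor is positive.

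\textbf{Upper integral.} We want $\Tr(XJXJ)/\Tr(J(W-X))+\Tr(JX)<1/\zeta+\Tr(JW)$, i.e. $\Tr(XJXJ)<\bigl(1/\zeta+\Tr(J(W-X))\bigr)\Tr(J(W-X))$. Here $X$ and $W$ need not commute with $J$, so we avoid diagonalizing. We use the resolvent identity $W-X=X(X^{-1}-W^{-1})W=\zeta XJW$, and similarly $W-X=\zeta WJX$, which gives $\Tr(J(W-X))=\zeta\Tr(JXJW)$. Since $X\preccurlyeq W$, we have $J^{1/2}XJXJ^{1/2}\preccurlyeq J^{1/2}XJWJ^{1/2}$: indeed, sandwiching $JXJ\ge0$ via property~\eqref{property:pd} gives $XJXJX\preccurlyeq\dots$, or more simply $\Tr(XJXJ)=\Tr(J^{1/2}XJ^{1/2}\,J^{1/2}XJ^{1/2})\le \Tr(J^{1/2}XJ^{1/2}\,J^{1/2}WJ^{1/2})$ by property~\eqref{property:trace-positive} applied to $J^{1/2}(W-X)J^{1/2}\succcurlyeq0$ times $J^{1/2}XJ^{1/2}\succcurlyeq 0$. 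Thus $\Tr(XJXJ)\le \Tr(JXJW)=\zeta^{-1}\Tr(J(W-X))$. This is strictly less than the right-hand side because $\Tr(J(W-X))>0$ by~\eqref{eq_comp_tr}. This step, handling non-commuting operators and finite traces in infinite dimensions, is the main obstacle; we will rely on the finite trace of $J$ to make all products trace class.

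\textbf{Consequences.} If $\int(L-U)\,d\mu>0$, the set $\{L\ge U\}$ cannot be null, so it has positive measure; in the infinite-measure case we use that $\int U<\infty$ and argue on the integrand. For the persistence claim, Lemma~\ref{lemboth} gives $\Phi(A')\le\Phi(A)$ and $\Psi(B')\le\Psi(B)$. Since the same increments $\delta,\zeta$ are kept, \eqref{ineq-lemboth} holds again with $A',B'$. We also obtain $\delta<\Phi(A')^{-1}$, so the hypotheses for the next step are satisfied.
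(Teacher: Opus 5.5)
Your proof is correct and is essentially the paper's argument, which itself follows BSS, Lemma~3.5. For the lower integral you use the same identity $Z^2 = YZ+\delta YZ^2$ (written eigenvalue-wise) and the same Cauchy--Schwarz inequality $\Tr(YZ)^2\le \Tr(Y)\Tr(YZ^2)$; for the upper integral you use the same resolvent identity $\Tr(J(W-X))=\zeta\Tr(JXJW)$ together with $\Tr(JXJX)\le\Tr(JXJW)$, which you derive from $W\succcurlyeq X$ rather than from the factorization $W-X=\zeta WJX$.

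The only thing to drop is the aside $J^{1/2}XJXJ^{1/2}\preccurlyeq J^{1/2}XJWJ^{1/2}$: its right-hand side is not self-adjoint in general, and the trace inequality you use in its place is the correct statement.
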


\begin{proof}
We follow \cite[Lemma 3.5]{BSS}.
Cleary, the `In particular'-part follows from the first part and Lemma~\ref{lemboth}.
Recalling that $Y:=A^{-1}\in \mathcal P_m$ 
and $Z:=(A- \delta I)^{-1}\in \mathcal P_m$ from the previous proof, we have
\[
Y^{-1}=Z^{-1}+\delta I.
\]
Multiplying this equality by $Y$ from the left and $Z$ and $Z^2$ from the right
yields
\[
Z=Y+\delta YZ\quad\text{and}\quad Z^2=YZ+\delta YZ^2.
\]
By a Cauchy-Schwarz inequality~\eqref{property:CS} between $Y^{1/2}$ and $Y^{1/2}Z$,
we have
\[
\Tr(Y)\Tr(Z^2)-\Tr(Z)\Tr(YZ)=\delta\Big(\Tr(Y)\Tr(YZ^2)- |\Tr(YZ)|^2\Big) \geq 0.
\]
Combining this with the equality $\Tr(Z-Y)=\delta\Tr(YZ)$, we compute the average of $L_A^\delta$:
\begin{equation}\label{eq:exp-LA}
\int_D L_A^\delta(a(x))\,d\mu(x)
=\frac{\Tr(Z^2)}{\delta\Tr(YZ)}-\Tr(Z)
\geq \frac{\Tr(Z)}{\Tr(Y)}\left(\frac{1}{\delta}-\Tr(Y)\right),
\end{equation}
which is larger than $\frac{1}{\delta}-\Phi(A)$ by \eqref{eq_comp_tr}.

On the other hand, for the upper potential, using $W:=B^{-1}\in \mathcal P$
and $X:=(B+\zeta J)^{-1}\in \mathcal P$, we get
\[
X^{-1}=W^{-1}+\zeta J\quad\text{and}\quad W=X+\zeta WJX,
\]
and hence $\Tr(JW-JX)=\zeta \Tr(JWJX)$. As $JWJ\succcurlyeq 0$ and $XJX\succcurlyeq 0$ by property~\eqref{property:pd}, we also have
\[
 \Tr(JWJX) - \Tr(JXJX)
 = \zeta  \Tr(JWJ \cdot XJX) \geq 0
\]
by property~\eqref{property:trace-positive}.
As a result,
\begin{equation}\label{eq:exp-UB}
\int_D U_B^\zeta(b(x))\,d\mu(x)=\frac{\Tr(JXJX)}{\Tr(JW-JX)}+\Tr(JX)
\leq \frac{1}{\zeta}+\Tr(JX),
\end{equation}
which is smaller than $\frac{1}{\zeta}+\Psi(B)$ by \eqref{eq_comp_tr}.
\end{proof}

We refer to Section~\ref{subsec:Christoffel} for a lower bound on the 
probability of the event $L_A^\delta(a(x))\geq U_B^\zeta(b(x))$ when we sample $x$ w.r.t.~the \emph{Christoffel density}.

\medskip

\begin{proof}[Proof of Theorem~\ref{thm:main}]
First assume that $m>1$ and $M:=\frac{\Tr(J)}{\lambda_{\max}(J)}\geq 1+\frac{1}{n}$, and denote
\[
r=\sqrt\frac{m-1}{n}
\quad\text{and}\quad
s=\sqrt{\frac{M-1}{n}}.
\]
We fix the increments
\[
\delta=\frac{1-r}{n}\quad\text{and}\quad \zeta = \frac{1+s}{n},
\]
and proceed by induction with 
\begin{equation}\label{eq:initialize}
A_0=\frac{\delta m}{r}\,I \in \mathcal P_m
\quad\text{and}\quad
B_0=\frac{\zeta\cdot\Tr(J)}{s}\,I \in \mathcal P,
\end{equation}
where we denote by $I$ the identity of appropriate dimension.
We compute
\[
\frac{1}{\delta}-\Phi(A_0)=\frac{1-r}{\delta}=n
=\frac{1+s}{\zeta}=
\frac{1}{\zeta}+\Psi(B_0).
\]

Applying Lemmas~\ref{lemboth} and~\ref{lemma:pos_measure} $n$ times,
we obtain points $x_1,\hdots,x_n \in D$
and weights $w_1,\hdots,w_n >0$
such that for $a(x_i)\in\C^m$ and $b(x_i)\in \ell_2(\II)$, 
the operators 
\[
A_n:=A_0-n\delta I+\sum_{i=1}^{n} w_i a(x_i) a(x_i)^*
\]
and
\[
B_n:=B_0+n\zeta J-\sum_{i=1}^{n} w_i b(x_i) b(x_i)^*
\]
belong to $\mathcal P_m$ and $\mathcal P$, respectively, and their potentials are bounded by
\[
\Phi(A_n)\leq \Phi(A_0)=\frac{r}{\delta}\quad\text{and}\quad
\Psi(B_n)\leq \Psi(B_0)=\frac{s}{\zeta}.
\]
As
$A_n \succcurlyeq \Phi(A_n)^{-1} I \succcurlyeq \frac{\delta}{r}\, I$, this gives
\begin{equation}\label{eq:LB}
\sum_{i=1}^{n} w_i a(x_i) a(x_i)^*
\succcurlyeq \Big(n\delta+\frac{\delta}{r}-\frac{m\delta}{r}\,\Big)I = (1-r)^2I.
\end{equation}
In the same way, as $B_n \succcurlyeq  \Psi(B_n)^{-1}J \succcurlyeq \frac{\zeta}{s}\, J$, we use the assumption $s\geq\frac{1}{n}$ and the bound $J \preccurlyeq \lambda_{\max}(J) I$ to obtain
\begin{equation}\label{eq:UB}
\sum_{i=1}^{n} w_i b(x_i) b(x_i)^*
\preccurlyeq \Big(n\zeta -\frac{\zeta}{s}\Big)J+\frac{\zeta}{s}\Tr(J)\, I \preccurlyeq (1+s)^2\lambda_{\max}(J) \, I,
\end{equation}
which is the desired result.

In the case $m=1$, we have $r=0$, hence $A_0$ is not defined and the above proof does not apply. However, we can replace the lower verifier $L_A^\delta(a(x))$ by $L(a(x))=n|a(x)|^2$, 
where we identify $a=(a_1)$ and $a_1$,
since it holds
\[
\int_D L(a(x))\,d\mu(x)=n.
\]
Hence, $\int L(a(x)) \ge \int U_{B_{i-1}}(b(x))$ for $i=1$,
and inductively choosing $x_i$ and $w_i$ with $L(a(x_i)) \ge \frac{1}{w_i} \ge U_{B_i}^\zeta(b(x_i))$,
also for $i=2,...,n$.
This results in points and weights that satisfy the upper bound~\eqref{eq:UB} and $w_i\geq 1/L(a(x_i))$.  Noting that the $a(x_i)$ are scalar,
\[
\sum_{i=1}^n w_i |a(x_i)|^2\geq \sum_{i=1}^n \frac{1}{n}=\lambda_{\min}(I).
\]
Similarly, if $M<1+\frac{1}{n}$, 
we replace the upper verifier $U_B^\zeta(b(x))$ by
\[
U(b(x))=\frac{n}{\Tr(J)}\,\sum_{k\in\mathbb{I}}|b_k(x)|^2,
\]
which satisfies
\[
\int_D U(b(x))\,d\mu(x)=n.
\]
Hence, $\int L^\delta_{A_{i-1}}(a(x)) \ge \int U(b(x))$ for $i=1$,
and inductively choosing $x_i$ and $w_i$ with $L^\delta_{A_{i-1}}(a(x_i)) \ge \frac{1}{w_i} \ge U(b(x_i))$,
also for $i=2,...,n$.
This results in points and weights that satisfy the lower bound~\eqref{eq:LB} and $w_i\leq 1/U(b(x_i))$.
We conclude with
\[\begin{split}
\lambda_{\max}\left(\sum_{i=1}^{n} w_i b(x_i) b(x_i)^*
\right)
&\leq\sum_{i=1}^{n} w_i \sum_{k\in\mathbb{I}}|b_k(x_i)|^2 
\leq
\Tr(J)=M\lambda_{\max}(J) \\ 
&\leq (1+s)^2\lambda_{\max}(J),
\end{split}\]
where for the last inequality $s<\frac{1}{n}$ implies $M=1+ns^2\leq 1+s$.
\end{proof}

\begin{remark}
Unlike \cite{BSS}, we decided to include the increments $\delta I$ and $\zeta J$ in $A$ and $B$, instead of defining upper and lower barriers. Apart from that, the potential $\Phi$ is the same as the one used in \cite{BSS}. The main difference appears in the upper potential $\Psi$, due to the multiplication by $J$. In particular, note that the initialization $B_0$ is a multiple of the identity, while the increments with $\zeta$ are multiples of $J$.
\end{remark}

\subsection{Proof of Proposition~\ref{coro:intro-equal}}
\label{sec:proof-coro}

The proof of Proposition~\ref{coro:intro-equal} is as in \cite[Thm.\,13]{KPUU25-uniform},
but instead of \cite[Thm.\,6.2]{BSU}, we use a special case of Corollary~\ref{coro:main}.
The case $p=\infty$ is treated with 
the following result of Kiefer and Wolfowitz~\cite{KW};
see also \cite[Prop.\,9]{KPUU25-uniform} for additional details.

\begin{prop}[\cite{KW}]\label{prop:KW}
Let $D$ be a set and $V_m$ be an $m$-dimen\-sion\-al subspace of~$B(D)$. 
For every $\varepsilon>0$, 
there exists a finitely supported measure $\mu_\eps$ on~$D$,
such that 
\[
 \|f\|_{\infty} \,\leq\, \sqrt{m+\varepsilon} \cdot \Vert f \Vert_{L_2(D,\mu_\eps)}. 
\]  
\end{prop}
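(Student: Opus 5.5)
The plan is to reduce the statement to a finite-dimensional optimal design problem and invoke the Kiefer--Wolfowitz equivalence theorem, as in \cite{KW}. We may assume $V_m$ consists of real- or complex-valued bounded functions with a basis $a_1,\dots,a_m$, and we set $a=(a_1,\dots,a_m)^\top$. For a finitely supported probability measure $\nu$ on $D$, consider the information matrix $G(\nu)=\int a\,a^*\,d\nu\in\C^{m\times m}$. If $G(\nu)$ is invertible, then for any $f=c^*a\in V_m$ the Cauchy--Schwarz inequality in the $G(\nu)$-inner product gives
\[
|f(x)|^2=|c^*a(x)|^2\le \big(c^*G(\nu)c\big)\,\big(a(x)^*G(\nu)^{-1}a(x)\big)=\|f\|_{L_2(D,\nu)}^2\, a(x)^*G(\nu)^{-1}a(x).
\]
It therefore suffices to find $\nu$ with $\sup_{x\in D} a(x)^*G(\nu)^{-1}a(x)\le m+\eps$. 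Note that the $\nu$-average of this Christoffel function equals $\Tr(G^{-1}G)=m$, so $m$ is the best value one can hope for.

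To find such a $\nu$, maximize $\log\det G(\nu)$ over finitely supported probability measures (D-optimal design). The feasible matrices form the convex hull $\mathcal C$ of $\{a(x)a(x)^*: x\in D\}$, which is bounded because the $a_k$ are bounded. By Carathéodory's theorem, every point of $\mathcal C$ is attained by a measure supported on at most $m^2+1$ points. Since $\mathcal C$ need not be closed when $D$ is infinite, I would take the supremum $S$ of $\log\det$ over $\mathcal C$; it is finite and exceeds $-\infty$ because the $a_k$ are linearly independent, so some $m$ points give an invertible $G$. Then pick $\nu$ with $\log\det G(\nu)\ge S-\eta$. Along the direction $\nu_t=(1-t)\nu+t\delta_x$, the derivative at $t=0$ of $\log\det G(\nu_t)$ is $a(x)^*G^{-1}a(x)-m$. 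The step I expect to be the main obstacle is turning near-optimality into the pointwise bound $a(x)^*G^{-1}a(x)\le m+\eps$ for \emph{all} $x$ without compactness, since there may be no exact maximizer.

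For that step I would use the concavity of $\log\det$ quantitatively. Write $d=a(x)^*G^{-1}a(x)$. Then
\[
\log\det G(\nu_t)=m\log(1-t)+\log\Big(1+\tfrac{t}{1-t}\,d\Big).
\]
Suppose $d>m+\eps$. Maximizing this expression over $t\in(0,1)$ yields an increase of at least some $c(\eps,m)>0$ over $\log\det G(\nu)$. The function is increasing in $d$, so evaluating it at $d=m+\eps$ with the choice $t=\eps/(m(m+\eps))$, or a similar explicit $t$, should give a strictly positive gain $c(\eps,m)$ that does not depend on $x$. Choosing $\eta<c(\eps,m)$ then contradicts $\sup=S$. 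Hence $\sup_x a(x)^*G(\nu)^{-1}a(x)\le m+\eps$, and $\mu_\eps:=\nu$ has finite support, which proves the claim. Finally, I would check that the elementary one-variable inequality really does give $c>0$. This reduces to the fact that $-m\log(1-t)$ and $\log(1+t(d-1)/(1-t))$ have derivatives $m$ and $d-1+1=d$ at $0$, so the gain is of first order $t(d-m)$ minus a second-order term.
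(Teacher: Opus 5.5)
Your argument is correct and complete, but it takes a different route from the paper. The paper gives no proof of its own: it quotes the Kiefer--Wolfowitz equivalence theorem \cite{KW} and refers to \cite[Prop.\,9]{KPUU25-uniform} for details.

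The classical route first establishes the existence of an exact D-optimal design. That step relies on compactness, namely a compact design space with continuous regressors. The equivalence theorem then gives $\max_x a(x)^*G(\nu^*)^{-1}a(x)=m$. For an arbitrary set $D$ and merely bounded functions, one first has to reduce to such a compact or finite situation, and this approximation is where the $\varepsilon$ comes from.

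You avoid the existence of a maximizer altogether. The perturbation $\nu_t=(1-t)\nu+t\delta_x$ and the matrix determinant lemma give an increase $c(\varepsilon,m)>0$ of $\log\det$ that is uniform in $x$ and $\nu$. Hence any $\eta$-near-maximizer with $\eta<c(\varepsilon,m)$ already satisfies $\sup_x a(x)^*G(\nu)^{-1}a(x)\le m+\varepsilon$. In effect this is the directional-derivative half of the equivalence theorem made quantitative.

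What your route buys:
\begin{itemize}
\item it is shorter and self-contained;
\item it needs no topology on $D$;
\item it directly yields a finitely supported \emph{probability} measure, which is what the proof of Proposition~\ref{coro:intro-equal} actually uses (there $J=\mu_\varepsilon(D)$ must equal $1$ to get the equal weights $1/n$).
\end{itemize}

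There are two cosmetic slips. First, your displayed identity is for $\log\det G(\nu_t)-\log\det G(\nu)$, not for $\log\det G(\nu_t)$. Second, in your last sentence the second term should read $\log\big((1+t(d-1))/(1-t)\big)$. Written exactly, the gain is $(m-1)\log(1-t)+\log(1+t(d-1))$. This is concave in $t$ with maximizer $t^*=(d-m)/(m(d-1))$. For $d=m+\varepsilon$, your choice $t=\varepsilon/(m(m+\varepsilon))$ lies in $(0,t^*)$, so the gain is indeed strictly positive.
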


\medskip

\begin{proof}[Proof of Proposition~\ref{coro:intro-equal}]
For $p=2$, we simply apply Corollary~\ref{coro:main} with $H$ the space of constant functions, that is, with $b(x)=1$ and $J=\begin{pmatrix}1\end{pmatrix}$. By the 
proof of Theorem~\ref{thm:main} (for the case $M<1+\frac1n$), it is possible to take weights
\[
w_i=\frac{1}{U(b(x_i))}=\frac{\lambda_{\max}(J)}{n\,|b(x_i)|^2}=\frac{1}{n}.
\]
The lower bound then yields the desired result since for all $n\geq m$,
\[
1-\sqrt{\frac{m-1}{n}}> \max\left(1-\sqrt{\frac{m}{n}},\frac{1}{2m}\right).
\]

For the case $p=\infty$, take $\varepsilon>0$ such that the LHS in the previous inequality is larger than $\sqrt{1+\varepsilon/m}$ times the RHS. The result follows by applying again Corollary~\ref{coro:main} with $H=\Span\{1\}$, but with $\mu$ replaced by the measure $\mu_\varepsilon$ from Proposition~\ref{prop:KW}.

Lastly, for $2<p<\infty$, let $x_1,\hdots,x_n$ be the the points from the case $p=2$
and $x_{n+1},\hdots,x_{2n}$ be the points from the case $p=\infty$.
Since $\mu$ is a probability measure,
\[
\Vert  f \Vert _{L_p(D,\mu)}  \leq 
 \Vert  f  \Vert_{L_2(D,\mu)}^{\frac{2}{p}} \cdot  \Vert  f \Vert_{\infty}^{1-\frac{2}{p}},
\]
and we conclude by combining the two previous bounds.
\end{proof}

\section{Construction of points and weights}
\label{sec:implementation}

The proof of Theorem~\ref{thm:main} not only implies that the set of admissible samples $(x_1,\dots,x_n)$ has positive measure, it also provides a criterion for finding these points. This is summarized in Algorithm~\ref{alg:abstract-construction} below, where we reuse the notations from Section~\ref{sec:main-proof}. However, two issues remain:
we did not specify how to suggest points $x_i$ on line~\ref{state:oracle};
and the basis $b$ could be infinite-dimensional, preventing the computation of $U_B^\zeta(b(x_i))$ and $B'$ on lines~\ref{state:test} and~\ref{state:update}.

\begin{algorithm}
\begin{algorithmic}[1]
\smallskip

\STATE{{\bf Input:} $m\in \mathbb N$, $n\geq m$, $a\in L_2(D,\mu)^m$, $b\in L_2(D,\mu)^{\mathbb I}$
}
\smallskip
\STATE{{\bf Set up:} $A\gets A_0$, $B\gets B_0$ as in \eqref{eq:initialize}}
\smallskip
\FOR{$\displaystyle i=1,\dots,n$}
\STATE{\textbf{Suggest} a sampling point $x_i \in D$
\label{state:oracle}}
\IF{ $L_A^\delta(a(x_i))\geq U_B^\zeta(b(x_i))$
\label{state:test}}
\STATE choose $w_i>0$ such that $1/w_i$ is between these values \label{state:choose-w}
\ELSE 
\STATE go to line~\ref{state:oracle}
\ENDIF
\STATE {\textbf{Update} $A \gets A'$, $B \gets B'$ as in \eqref{eq:updates}\label{state:update}}
\ENDFOR
\STATE{{\bf Output:} Points $(x_i)_{1\leq i\leq n}$ and weights $(w_i)_{1\leq i\leq n}$}
\end{algorithmic}
\caption{Construction of the points and weights from Theorem~\ref{thm:main}}
\label{alg:abstract-construction}
\end{algorithm}

Regarding the first issue, one option is to find a finite subset of $D$ in which at least one point satisfies the condition of line~\ref{state:test}.
This 
candidate set
could be a rank-1 lattice \cite{BKPU}, an admissible mesh \cite{BPV}, or a large initial sample \cite{CDdomains}.
Constructing points by searching within a larger
finite set is often referred to as \emph{subsampling},
and has been performed in \cite{BKPU,BSU}
for algorithms based on~\cite{BSS} in the case $b=a$.
Note that we do not have to know a priori whether the candidate set is suitable, we can simply test our favorite set.

Another way of dealing with this issue is to propose a point $x_i$ at random.
Similarly to~\cite{ChkifaDolbeault24}, 
we show in Section~\ref{subsec:Christoffel} below that a random sample from the so-called \emph{Christoffel density}
will pass the criterion from line~\ref{state:test}, and hence will be accepted, with a probability of order $1/m$.

The second issue is addressed in Section~\ref{subsec:truncation}, where we analyse the error from truncating the infinite basis $b$ at some index $N$. In practice, we do not restrict ourselves to the RKHS setting, and consider functions $f$ in the span of an orthonormal system $(\eta_k)_{k\in\II}$ in 
$L_2(D,\mu)$ such that, for any $k\in \II$ and $x\in D$, the point value $\eta_k(x)$ can be computed in time $O(1)$.
Given a function $f\in L_2(D,\mu)$ and a finite set $I\subset \II$, we define 
the best approximation of $f$ in $\Span\{\eta_k,\; k\in I\}$ as
\[
 f_I := \sum_{k\in I} \hat{f}(k)\, \eta_k,
 \quad\text{where}\quad
 \hat{f}(k) := \int_D f \,\overline{\eta_k}\, d\mu.
\]
In order to bound the truncation error $f-f_I$ in uniform norm, we assume that the functions $\eta_k$ are bounded in $L_\infty$, with a bound of the form
\begin{equation}\label{eq:assum-uniform-bound}
    \sum_{k\in I_\ell} |\eta_k(x)|^2 \,\leq\, C_{\eta}^2\, \ell^{2\theta},\qquad x\in D,\quad \ell\in\N
\end{equation}
for some finite $\theta \geq 1/2$ and $C_\eta>0$, and where $(I_\ell)_{\ell\in \N}$ is a nested sequence of subsets of $\II$ with $|I_\ell|\leq \ell$. Since the integral of the left hand side in~\eqref{eq:assum-uniform-bound} equals $|I_\ell|$,
the value $\theta=1/2$ is the best possible value (unless $|I_\ell| = o(\ell)$).
Examples where this best possible behavior occurs are given by the Fourier, Walsh, Haar, or Chebyshev system with their corresponding orthogonality measure $\mu$.
However, note that the value of $\theta$ will not appear in our error bound;
it will appear in the result by restricting the class of functions for which the error bound is applicable.
Also, condition \eqref{eq:assum-uniform-bound} can be avoided if another way of bounding $\Vert f - f_I \Vert_\infty$ is available, see Remark~\ref{rem:uniform-decay}.

Combining the above condition on $(\eta_k)$ with a decay assumption on the coefficients of $f$, we prove that the truncation error can be controlled for a set $I_N$ of large cardinality $N$. As this integer can be much larger than $m$ and~$n$, we present in Section~\ref{subsec:fast-comp} an implementation whose cost is polynomial in $n$ but linear in $N$. We note that, as the algorithm produces deterministic guarantees from randomly proposed points, its computational complexity is necessarily a random variable.
In the end, we obtain our constructive result.

\begin{theorem}
\label{thm:constructive}
Let $(D,\mu)$ be a probability space, $(\eta_k)_{k\in\II}$ an $L_2$-orthonormal system and $(I_\ell)_{\ell\in\N}$ nested index sets with $|I_\ell|\le\ell$ that satisfy condition~\eqref{eq:assum-uniform-bound}, $m\geq 2$, $n=2 m$, $\alpha_0>\theta$ and $N=\lceil m^{\alpha_0/(\alpha_0-\theta)}\rceil$.
Assume that the constant function $1$ belongs to $\{\eta_k,\;k\in I_m\}$. Algorithm~\ref{alg:abstract-construction} can be implemented, with $a$ and $b$ defined in \eqref{eq:a-b-constructive}, in time
\[
O\big((c+Nn)n^2\mathcal G\big),
\]
where $c$ is the cost of drawing one point from the Christoffel density, and ${\mathcal G\in\N}$ is a shifted geometric random variable of parameter $1/2$. It yields points $x_1,\dots,x_n\in D$ and weights $w_1,\dots,w_n>0$ which satisfy the following.
If $f\in L_2(D,\mu)$ with
\[
\|f-f_{I_\ell}\|_2\leq C_f\, \ell^{-\alpha}\log^\beta(\ell)
\]
for some $\alpha\geq \alpha_0$ and $\beta\geq 0$, and if
$f(x_i)=\lim_{\ell\to\infty}f_{I_\ell}(x_i)$ for $1\leq i\leq n$,
the weighted least-squares approximation \eqref{def:wls}
from these values satisfies
\[
\|f-\tilde f\|_2\leq C_f\,C\, m^{-\alpha}\log^\beta(m),
\]
where $C$ depends on $\theta$, $\alpha_0$, $\alpha$, $\beta$ and $C_\eta$. 
\end{theorem}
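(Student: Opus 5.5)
The plan is to run Algorithm~\ref{alg:abstract-construction} with a finite truncation of the upper family. We set
\[
a=(\eta_k)_{k\in I_m},\qquad b=\Big(\eta_k\Big)_{k\in I_m}\cup\Big(\ell_k^{-\alpha_0}\eta_k\Big)_{k\in I_N\setminus I_m},
\]
or rather a dyadically weighted version. Concretely, for $k\in I_{2^{j+1}m}\setminus I_{2^jm}$ we take the weight $2^{-j\alpha_0}$, and we include one extra coordinate for the constant function. This is what \eqref{eq:a-b-constructive} should encode. With these choices $I$ is the identity and $J$ is diagonal with $\lambda_{\max}(J)=1$, and
\[
\Tr(J)\le m+\sum_j 2^{j}m\,2^{-2j\alpha_0}\lesssim m
\]
because $\alpha_0>\theta\ge 1/2$. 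So the effective dimension satisfies $M\lesssim m$, and Theorem~\ref{thm:main} with $n=2m$ gives lower constant $(1-r)^2\ge(1-1/\sqrt2)^2$ and upper constant $(1+s)^2=O(1)$. The proof of Theorem~\ref{thm:main} is constructive. Each step of Algorithm~\ref{alg:abstract-construction} only requires a point in the positive-measure acceptance set. For the oracle on line~\ref{state:oracle} I would use the Christoffel-density sampling of Section~\ref{subsec:Christoffel}, which accepts with probability of order $1/m$. I would redo that estimate here to get a constant acceptance probability per batch, which yields the geometric variable $\mathcal G$ with parameter $1/2$. Each verifier evaluation and rank-one update costs $O(Nn)$ if we maintain the $N\times N$ diagonal-plus-low-rank structure of $B$ through Sherman--Morrison (Section~\ref{subsec:fast-comp}). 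This gives the total cost $O((c+Nn)n^2\mathcal G)$.

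For the error bound, I would split $f=f_{I_N}+(f-f_{I_N})$. On the finite part, the discretization argument of Corollary~\ref{cor:recovery} applies with $V_m=\Span\{\eta_k:k\in I_m\}$. Write $g=f_{I_N}-f_{I_m}$. The upper frame bound gives
\[
\sum_i w_i|g(x_i)|^2\lesssim \sum_j 2^{2j\alpha_0}\,\|f_{I_{2^{j+1}m}}-f_{I_{2^jm}}\|_2^2\lesssim C_f^2\sum_j 2^{2j(\alpha_0-\alpha)}(2^jm)^{-2\alpha}\log^{2\beta}(2^jm).
\]
This series converges to $O(C_f^2m^{-2\alpha}\log^{2\beta}m)$ since $\alpha\ge\alpha_0$; the factor $2^{-2j\alpha}$ from the decay absorbs the growth from $2^{2j\alpha_0}$ together with the logarithms. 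The lower bound then controls the least-squares projection, exactly as in the proof of Corollary~\ref{cor:recovery}.

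The tail $h=f-f_{I_N}$ is bounded in sup norm. Using $f(x_i)=\lim f_{I_\ell}(x_i)$ together with Cauchy--Schwarz on dyadic blocks and \eqref{eq:assum-uniform-bound}, we get
\[
\|h\|_\infty\le\sum_{j}\Big(\sum_{k\in I_{2^{j+1}N}\setminus I_{2^jN}}|\hat f(k)|^2\Big)^{1/2}C_\eta(2^{j+1}N)^{\theta}\lesssim C_f\,N^{\theta-\alpha}\log^\beta N.
\]
This quantity is treated as noise. The constant $1$ lies in $V_m$ and in $b$, so the weight-control argument of Proposition~\ref{rk:weight_control} and Corollary~\ref{coro:noisy-recovery} gives $\sum w_i=O(1)$, and hence the noise contributes $O(\|h\|_\infty)$ to the error. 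The choice $N\ge m^{\alpha_0/(\alpha_0-\theta)}$ gives $N^{\theta-\alpha}\le m^{-\alpha}$ for $\alpha\ge\alpha_0$, because $\frac{\alpha_0(\alpha-\theta)}{\alpha_0-\theta}\ge\alpha$. The main obstacle is the runtime claim, namely proving a uniform-in-step acceptance probability of at least $1/2$ for a batch of about $m$ Christoffel samples, and implementing the $B$-updates in $O(Nn)$ rather than $O(N^2)$. Compared with these, the error analysis is routine bookkeeping of the dyadic sums.
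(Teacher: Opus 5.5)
Your architecture matches the paper's. You use a weighted tail family $b$ on $I_N\setminus I_m$ and the lower frame bound on $V_m$. You put the constant function in the span of $b$, so that $\sum_i w_i=O(1)$ and the tail $f-f_{I_N}$ can be treated as bounded noise. The dyadic estimates and the choice of $N$ are the same. The gap is in the decay exponent you put into $b$. If the block $I_{2^{j+1}m}\setminus I_{2^jm}$ is weighted by $2^{-j\alpha_0}$, the correct estimate is
\[
\sum_i w_i|g(x_i)|^2\lesssim \sum_{j=0}^{j_N} 2^{2j\alpha_0}\,C_f^2\,(2^jm)^{-2\alpha}\log^{2\beta}(2^jm)
= C_f^2\,m^{-2\alpha}\sum_{j=0}^{j_N}2^{2j(\alpha_0-\alpha)}\log^{2\beta}(2^jm),
\]
with $j_N=\lceil\log_2(N/m)\rceil\approx\frac{\theta}{\alpha_0-\theta}\log_2 m$. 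Your display carries a spurious extra factor $2^{-2j\alpha}$, which is what makes the series look summable for every $\alpha\ge\alpha_0$. The theorem includes the endpoint $\alpha=\alpha_0$, and the points must handle it because the construction cannot depend on $\alpha$. At that endpoint the terms do not decay and the sum is of order $\log^{2\beta+1}m$. Your argument then only yields $\|f-\tilde f\|_2\lesssim C_f\,m^{-\alpha_0}\log^{\beta+1/2}(m)$, an extra $\sqrt{\log m}$.

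The remedy is to take an exponent strictly between $1/2$ and $\alpha_0$. The paper uses $b=(k^{-t}\eta_k)_{m<k\le N}$ with $t=(\alpha_0+\theta)/2$. The trace bound only needs $t>1/2$, which holds since $t>\theta\ge 1/2$. It gives $\Tr(K_m)\le\lambda_m\frac{m+2t}{2t-1}$ and hence $1+\tilde s=O(1)$. The dyadic series then becomes geometric with ratio $2^{2(t-\alpha)}\le 2^{2(t-\alpha_0)}<1$, uniformly in $\alpha\ge\alpha_0$.

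Three smaller points:
\begin{itemize}
\item The acceptance probability of order $1/m$ for Christoffel samples requires replacing $\delta$ by $\delta_\epsilon$ as in Lemma~\ref{lem:accept_proba}, with $\epsilon=(1-r)^2/2$. This halves your lower frame constant. With the original $\delta$, the argument only guarantees $r(1-r)^2/m^2$.
\item The $B$-update does not cost $O(Nn)$. The diagonal part $D=B_0+i\zeta J$ changes at every step, so the $i\times i$ Woodbury matrix $I-M^*D^{-1}M$ must be rebuilt at cost $O(Nn^2)$ per iteration. This is harmless: the total $O(Nn^3)$ is absorbed in $O((c+Nn)n^2\mathcal G)$ since $\mathcal G\ge 1$.
\item If you keep $(\eta_k)_{k\in I_m}$ in $b$, do not add a second coordinate for the constant function. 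A repeated coordinate makes $J$ singular, which violates the hypotheses of Theorem~\ref{thm:main}.
\end{itemize}
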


In summary, given \eqref{eq:assum-uniform-bound},
we can construct $2m$ points and weights in (randomized) polynomial time
such that the corresponding weighted least squares estimator has the same rate of convergence as best approximation from the $m$-dimensional spaces, provided that the rate of best approximation is large enough.
The rest of this section is dedicated to the proof of Theorem~\ref{thm:constructive}.

\subsection{Christoffel sampling}
\label{subsec:Christoffel}

One possible implementation of line~\ref{state:oracle} in the algorithm consists of drawing candidate points for $x_{i}$ according to the Christoffel density associated to $V_m$,
\[
d\rho(x) = \frac{1}{m}\sum_{k=1}^m|a_k(x)|^2\,d\mu(x).
\]
For some $V_m$, this density can be given explicitly, and even sampled from. 
However, by assumption \eqref{eq:assum-uniform-bound} of Theorem~\ref{thm:constructive}, 
sampling from $\rho$ 
can be done via rejection sampling from the underlying measure with an acceptance probability of order at least $m^{-(2\theta-1)}$.
See e.g.~\cite[Sec.\,5]{CDdomains} and \cite{HA25} for alternative strategies 
to generate samples from the Christoffel density.

Recall that a suggested point $x$ will be accepted if $L_A^\delta(a(x)) \ge U_B^\zeta(b(x))$.
By using a slightly smaller increment $\delta$ for the lower barrier in the proof of Theorem~\ref{thm:main}, a good acceptance probability is achieved in exchange for a slightly worse lower frame bound.

\begin{lemma}
\label{lem:accept_proba}
If $x$ is drawn according to the Christoffel measure $\rho$ and the increment $\delta=\frac{1-r}{n}$ is replaced by
\[
\delta_\epsilon=\frac{1-r-\epsilon}{n}
\]
for some parameter $\epsilon < (1-r)^2$ in the computation of $L_A^\delta$ and the updates $A\leftarrow A'$ (but not in the initialization $A_0$),
then the acceptance probability satisfies
\[
\mathbb P\left(L_A^{\delta_\eps}(a(x)) \ge U_B^\zeta(b(x))\right) \ge \frac{\epsilon}{m}.
\]
In addition, any points $x_1,\dots x_n$ returned by our algorithm for the modified value of $\delta$ satisfy the upper bound \eqref{eq:UB} while the lower bound \eqref{eq:LB} becomes
\[
\lambda_{\min}\left(\sum_{i=1}^{n} w_ia(x_i)a(x_i)^*\right)
\geq  \Big(n\delta_\epsilon+\frac{\delta}{r}-\frac{m\delta}{r}\,\Big)= (1-r)^2-\epsilon.
\]
\end{lemma}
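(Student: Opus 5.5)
Two things need proof: a lower bound on the acceptance probability, and the modified frame bounds. We work at an arbitrary step of Algorithm~\ref{alg:abstract-construction}, with current $A$ and $B$. The invariant maintained along the run is the modified version of \eqref{ineq-lemboth},
\[
\frac{1}{\delta}-\Phi(A)\;\ge\;\frac{1}{\zeta}+\Psi(B),
\]
where $\delta$ is the original increment used to initialize $A_0$. This holds at $A_0,B_0$ (it is an equality there). It is preserved because Lemma~\ref{lemboth}, applied with the increment $\delta_\eps$, still gives $\Phi(A')\le\Phi(A)$ and $\Psi(B')\le\Psi(B)$ whenever a point is accepted. One point to check: Lemma~\ref{lemboth} needs $\delta_\eps<\Phi(A)^{-1}$. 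This follows from $\delta_\eps<\delta$ and $\Phi(A)\le\Phi(A_0)=r/\delta<1/\delta$.

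\textbf{Acceptance probability.} Write $\mathbb E_\mu$ for $\int\cdot\,d\mu$.

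First, by Lemma~\ref{lemma:pos_measure} with $\delta_\eps$ in place of $\delta$,
\[
\mathbb E_\mu\bigl[L_A^{\delta_\eps}(a)\bigr]>\frac1{\delta_\eps}-\Phi(A)
\quad\text{and}\quad
\mathbb E_\mu\bigl[U_B^\zeta(b)\bigr]<\frac1\zeta+\Psi(B)\le \frac1\delta-\Phi(A).
\]
So $\mathbb E_\mu[L-U]\ge \frac1{\delta_\eps}-\frac1\delta=\frac{n\eps}{(1-r)(1-r-\eps)}$.

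Next, transfer this to the Christoffel measure. Set $\kappa(x)=\sum_k|a_k(x)|^2$, so that $d\rho=\kappa\,d\mu/m$. Since $L$ is a quadratic form in $a(x)$, we have $L_A^{\delta_\eps}(a(x))\le a^*Z^2a/\Tr(Z-Y)\le \kappa(x)\,\lambda_{\max}(Z)^2/\Tr(Z-Y)$. The aim is an upper bound of the form $L(a(x))\le C\kappa(x)$ with an explicit constant $C$.

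Let $E=\{L\ge U\}$. Because $U\ge0$,
\[
\mathbb E_\mu[L-U]\le\int_E L\,d\mu\le C\int_E\kappa\,d\mu=Cm\,\rho(E),
\]
which gives $\rho(E)\ge \mathbb E_\mu[L-U]/(Cm)$.

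\textbf{Main obstacle.} The difficulty is to bound $C$ by something like $\frac{n}{(1-r)(1-r-\eps)\eps}\cdot\eps^2$, i.e.\ to obtain the clean constant $\eps/m$. I would estimate $\lambda_{\max}(Z)$ and $\Tr(Z-Y)$ through the eigenvalues $\lambda_j$ of $A$. Since $\Phi(A)\le r/\delta$, every $\lambda_j\ge\delta/r$, so $\lambda_j-\delta_\eps\ge\delta/r-\delta_\eps$. This yields
\[
\frac{(\lambda_j-\delta_\eps)^{-2}}{\sum_i\bigl[(\lambda_i-\delta_\eps)^{-1}-\lambda_i^{-1}\bigr]}
\le\frac{(\lambda_j-\delta_\eps)^{-2}}{(\lambda_j-\delta_\eps)^{-1}-\lambda_j^{-1}}=\frac{\lambda_j}{\delta_\eps(\lambda_j-\delta_\eps)}.
\]
The right-hand side is decreasing in $\lambda_j$, so it is at most $\frac{1}{\delta_\eps(1-r\delta_\eps/\delta)}$. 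Plugging in $\delta_\eps=(1-r-\eps)/n$ gives a bound $C\approx n/\bigl((1-r-\eps)(1-r(1-r-\eps)/(1-r))\bigr)$. Combining with the previous display should then produce $\rho(E)\gtrsim \eps/m$, and I expect the hypothesis $\eps<(1-r)^2$ to enter exactly when simplifying to the clean $\eps/m$. If the constants do not close, the fallback is to keep the $-a^*Za$ term in $L$, which I would try next to sharpen the estimate.

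\textbf{Frame bounds.} The upper bound \eqref{eq:UB} follows unchanged, since $B$ evolves exactly as before. For the lower bound, $A_n=A_0-n\delta_\eps I+\sum_i w_ia(x_i)a(x_i)^*$ with $\Phi(A_n)\le r/\delta$, so $A_n\succcurlyeq(\delta/r)I$. Rearranging gives
\[
\sum_i w_i a(x_i)a(x_i)^*\succcurlyeq \Bigl(n\delta_\eps+\frac{\delta}{r}-\frac{m\delta}{r}\Bigr)I=(1-r)^2-\eps,
\]
using $n\delta_\eps=n\delta-\eps$.
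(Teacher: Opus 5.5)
Your argument is correct and has the same skeleton as the paper's proof. You keep \eqref{ineq-lemboth} with the original $\delta$ as an invariant and get $\int (L-U)\,d\mu>\frac1{\delta_\eps}-\frac1\delta$ from Lemma~\ref{lemma:pos_measure}. You then bound $L\le C\kappa$ uniformly and conclude with a first-moment argument under $\rho$. Your inequality $\int_E L\,d\mu\le Cm\,\rho(E)$ is the paper's $\mathbb P(\Delta\ge0)\ge\int\Delta\,d\rho/\sup\Delta$, written without dividing by $\kappa$, and your treatment of the frame bounds is identical to the paper's.

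The one genuinely different ingredient is the uniform bound on $L/\kappa$. The paper uses $\lambda_{\max}(Z^2)\le\Tr(Z^2)$, the identity $Z-Y=\delta_\eps YZ$ and $\Tr(YZ^2)\le\Tr(Y)\Tr(Z^2)$ to reach $C=\frac{1}{\delta_\eps(1-r)}$. You instead diagonalize $A$, keep a single term of $\Tr(Z-Y)$ and use $\lambda_{\min}(A)\ge\delta/r$, reaching $C=\frac{1}{\delta_\eps(1-r\delta_\eps/\delta)}$. This is more elementary and slightly sharper. The paper's chain gives the same constant if one uses $\Phi(A)\le r/\delta$ rather than $r/\delta_\eps$.

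However, you stopped one line short and presented that line as an obstacle, when it is not one. Since $\delta_\eps<\delta$, your constant satisfies $C\le\frac{1}{\delta_\eps(1-r)}$. Hence
\[
\rho(E)\;\ge\;\frac{1/\delta_\eps-1/\delta}{mC}\;\ge\;\frac{\eps}{m\,\delta_\eps(1-r)}\cdot\delta_\eps(1-r)\;=\;\frac{\eps}{m},
\]
and your sharper constant even gives $\frac{\eps}{m}\bigl(1+\frac{r\eps}{(1-r)^2}\bigr)$.

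Discard two remarks in your ``main obstacle'' paragraph:
\begin{itemize}
\item Your target $\frac{n}{(1-r)(1-r-\eps)\eps}\cdot\eps^2$ is off by a factor $\eps$; it would give $\rho(E)\ge 1/m$. What you actually need is only $C\le\frac{n}{(1-r)(1-r-\eps)}$.
\item The hypothesis $\eps<(1-r)^2$ plays no role in the probability bound. There you only use $\eps<1-r$, i.e.\ $\delta_\eps>0$. Its sole purpose is to keep $(1-r)^2-\eps$ positive in the lower frame bound.
\end{itemize}
The fallback of retaining the $-a^*Za$ term is unnecessary.
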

In the sequel, we will take $\epsilon=\frac{(1-r)^2}{2}$, which changes \eqref{eq:LB} by a factor 2.
\begin{proof}
We start with $A \in \mathcal P_m$ and $B\in \mathcal P$ such that \eqref{ineq-lemboth} holds for the original $\delta=\frac{1-r}{n}$.
Let
\[
\Delta(x):= \frac{L_A^{\delta_\epsilon}(a(x))- U_B^\zeta(b(x))}{\sum_{k=1}^m|a_k(x)|^2}.
\]
We now consider a random variable $x\sim \rho$.
According to Lemma~\ref{lemma:pos_measure},
the expectation of $\Delta(x)$ is 
\begin{align*}
\E(\Delta(x))
=\int_D \frac{L_A^{\delta_\epsilon}(a(x))-U_B^\zeta(b(x))}m \, d\mu(x)
\geq \frac{1}{m}\left(\frac{1}{\delta_\epsilon}-\Phi(A)- \frac{1}{\zeta}-\Psi(B)\right).
\end{align*}
Thanks to condition \eqref{ineq-lemboth}, 
we get
\[
\E(\Delta(x))
\geq \frac{1}{m}\left(\frac{1}{\delta_\epsilon}-\frac{1}{\delta}\right)=\frac{\epsilon}{m \,\delta_\epsilon(1-r)}.
\]
On the other hand, $\Delta(x)$ is almost surely bounded by
\begin{align*}
\Delta(x)
\leq \sup_{\substack{a\in\C^m 
}} \frac{L_A^{\delta_\eps}(a)}{\sum_{k=1}^m|a_k|^2} 
\leq \frac{\lambda_{\max}(Z^2)}{\Tr(Z)-\Tr(Y)}\leq  \frac{\Tr(Z^2)}{\delta_\epsilon\Tr(YZ)}\leq \frac{1}{\delta_\epsilon(1-r)},
\end{align*}
where $Y$ and $Z$ are from Section~\ref{sec:main-proof} with $\delta_\eps$ in place of $\delta$. Here,
the third inequality follows from $Z-Y=\delta_\epsilon YZ$
and the last inequality comes from the fact that
\[
\Tr(Z^2)-\Tr(YZ)=\delta_\epsilon \Tr(Y Z^2)\leq \delta_\epsilon\Tr(Y)\Tr(Z^2)\leq r\Tr(Z^2),
\]
by a Cauchy-Schwarz inequality~\eqref{property:CS} between $Y^{1/2}$ and $Y^{1/2}Z$ and
since $\Tr(Y)=\Phi(A)\leq r/\delta_\epsilon$. In the end, we obtain an acceptance probability
\[
\mathbb P(\Delta(x)\geq 0)
\geq \frac{\int_D \Delta\, d\rho}{\sup_{x\in D} \Delta(x)}\geq \frac{\epsilon}m.
\]
Note that, by Lemma~\ref{lemboth}, choosing $x_i$ with $\Delta(x_i) \ge 0$ and $w_i$ accordingly, we obtain that the potentials do not increase 
when switching to the updates 
$A':=A-\delta_\eps I+wa(x_i)a(x_i)^*$ and 
$ B':=B+\zeta J-wb(x_i)b(x_i)^*$ and
so 
condition~\eqref{ineq-lemboth} remains true with the original $\delta$ also for the updated operators.
\end{proof}

In total, when $n=O(m)$, we therefore need $\mathcal O(m^2)$ candidate points drawn according to the Christoffel functions, in order to implement the algorithm. More precisely, the number of draws is a geometric random variable of expectation $\mathcal O(m^2)$.
We cannot hope to use less Christoffel points, as illustrated by the following example.

\begin{example}
\label{ex:christoffel}
For each $m\in\N$, consider $D=[0,m]$ with the Lebesgue measure $\mu$
and define a RKHS $H$ on $D$ by its kernel $K(x,y)=\sum_{k\in \II} \sigma_k^2 b_k(x) b_k(y)$, where the functions
\[
b_k := \left\{\begin{array}{cl}
\mathbbm 1_{[k-1,k]}  &\text{ for } 1\leq k\leq m,\\
\cos(2\pi (k-m)x) \mathbbm 1_{[0,1]} &\text{ for } k>m,\\
\sin(2\pi (m-k)x) \mathbbm 1_{[0,1]} &\text{ for } k<-m,\\
\end{array}\right.
\]
are orthonormal in $L_2(D)$,
and $\sigma_k:=\frac1{|k|}$ for $k\in\II:=\Z\setminus\{-m,\dots,0\}$. Let also $V_m=\Span\{b_k \colon 1\leq k\leq m\}$. From~\eqref{eq:n-is-2m}, we get
\[
 g_{2m}^{\rm lin}(B_H,L_2) \,\leq\, \frac{10}m.
\]
On the other hand,
let $A_n$ be an approximation that uses information from at
most $n$ samples $x_1,\hdots,x_n$.
Let $\{y_1,\hdots,y_{n'}\} = \{x_1,\hdots,x_n\} \cap [0,1]$
be the samples in $[0,1]$. Observing that $A_n(-f)=A_n(f)$ if $f(x_1)=\dots f(x_n)=0$,
\[
 \sup_{\Vert f \Vert_H \leq 1} \Vert f - A_n(f) \Vert_{L_2([0,m])}
 \geq \sup_{\substack{\Vert f \Vert_H \leq 1 \\ f(x_i)=0}} \Vert f \Vert_{L_2([0,m])}
 \geq \sup_{\substack{\Vert f \Vert_H \leq 1 \\ f(y_i)=0}} \Vert f \Vert_{L_2([0,1])}.
 \]
As $\|f\|_{L_2([0,1])}\geq \big|\int_0^1f\big|$, the lower bound from \cite[Corollary~2]{KV} with $\lambda_0=1$ and $\lambda_k=\sigma_{|k|+m}^2$ for $k\neq 0$ gives
 \[
\sup_{\substack{\Vert f \Vert_H \leq 1 \\ f(y_i)=0}} \Vert f \Vert_{L_2([0,1])}
 \geq \sqrt{\min\bigg\{\frac{\lambda_0}2, \ \frac{1}{8n'} \sum_{k> 4n'} \lambda_k\bigg\}}
 \geq \frac{1}{8\max(n',\sqrt{mn'})}.
\]
As a consequence, the approximation $A_n$ can only attain an optimal error if $n' \gtrsim m$.
As the Christoffel density of $V_m$ is uniform on $[0,m]$,
if $x_1,\hdots,x_n$ are drawn i.i.d. from that density, 
we must have $n \gtrsim m^2$ with high probability. In summary, any 
algorithm achieving an error of optimal order 
must use $\Omega(m^2)$ Christoffel samples.
\end{example}

\begin{remark}
We remark that sampling from the 
adaptive quasi-density $a(x)^*Z^2a(x)$
would instead lead to a constant acceptance probability. However, we do not see an efficient way for sampling from it 
directly, see also~\cite{ChkifaDolbeault24}.
A different sampling density has been introduced in \cite{KU1},
namely,
\[
 d\tilde\varrho(x) = \frac{1}{2} d\rho(x) + \frac{1}{2} \frac{K_m(x,x)}{\Tr(K_m)} d\mu(x),
\]
where $K_m$ is the kernel of $H_m$, the orthogonal complement of $V_m$.
With this density (and weights $w_i= 1/(n\tilde \rho(x_i))$), 
it is proven that
$n=\mathcal O(m\log m)$ i.i.d. samples suffice with high probability.
In the particular example above,
the density results in choosing approximately $1/2$ of the points in $[0,1]$
instead of a ratio $1/m$.
\end{remark}

\begin{remark}
    In a different setting,
    where we consider instead the expected error $\mathbb E\,\Vert f -\tilde f \Vert_2^2$
    of random reconstructions,
    it is known that already $\mathcal{O}(m\log m)$ i.i.d. Christoffel samples
    give optimal results, see~\cite{CM}.
    In that setting, better constants can also be attained using determinantal point processes \cite{BBC}.
    In particular, in the interpolation regime $n=m$, the factor $m$ in \eqref{eq:n-is-m} can be replaced by $\sqrt{m}$ when considering randomized sampling numbers.
\end{remark}

We add that it would be 
possible to keep the value $\delta=(1-r)/n$ in Lemma~\ref{lem:accept_proba}, with a smaller acceptance probability.
Taking $w_i=1/L_A^\delta(a(x_i))$ for the weights, we get $\Phi(A')=\Phi(A)$ by~\eqref{eq:lower-barrier-cond}, thus the lower potential remains equal to its initial value 
\[
\Phi(A)=\Tr(Y)=\frac{r}{\delta}\quad\text{and}\quad\frac{1}{\delta}-\Tr(Y)=n.
\]
Recall that we also have $\frac{1}{\zeta}-\Psi(B)\leq n$, and observe that
\[
\Tr(Z)-\Tr(Y)=\delta\Tr(YZ)
\overset{\eqref{property:trace-positive}}{\geq}
\delta\Tr(Y^2)
\overset{\eqref{property:CS} }{\geq}
\frac{\delta}{m}\big(\Tr Y\big)^2
=\frac{r}{m}\Tr(Y).
\]
Then, according to \eqref{eq:exp-LA} and \eqref{eq:exp-UB}, 
the expectation of $\Delta(x_i)$ w.r.t. $x_i\sim d\rho$ is
\begin{align*}
\E(\Delta(x_i))
=\int_D \frac{L_A^\delta(a(x))-U_B^\zeta(b(x))}m \, d\mu(x)
\geq \frac{\Tr(Z)}{\Tr(Y)}\,\frac nm - \frac nm
\geq \frac{rn}{m^2}.
\end{align*}
Combining this calculation with the uniform bound on $\Delta$, the acceptance probability is
\[
\mathbb P(\Delta(x)\geq 0)\geq \frac{\int_D \Delta d\rho}{\sup_{x\in D} \Delta(x)}\geq \frac{rn\delta(1-r)}{m^2}=\frac{r(1-r)^2}{m^2},
\]
which is smaller than in Lemma~\ref{lem:accept_proba} by a factor $m$.

\begin{remark}
Example~\ref{ex:christoffel} suggests that the inequality $\lambda_{\max}(Z^2)\leq \Tr(Z^2)$ in the proof of Lemma~\ref{lem:accept_proba} is sharp, while the above Cauchy-Schwarz inequality $\Tr(Y^2)\geq \frac 1 m (\Tr Y)^2$ is not. 
In other words, the matrix $A$ only has a few small eigenvalues, which explains why the potential $\Phi(A)$ gives a tight control on $\lambda_{\min}(A)$ in the BSS approach.
\end{remark}

\subsection{Constructive results via truncation}
\label{subsec:truncation}

For the sake of simplicity, in this subsection, we assume that $\II=\N$ and that $I_\ell=\{1,\dots,\ell\}$, which only amounts to reordering the indices, and can therefore be done without loss of generality. We also denote $f_m=f_{I_m}$.

Additionally, all $\log$ functions are meant in base $2$, which does not influence the statement of Theorem~\ref{thm:constructive}.

\begin{proof}[Proof of the error bound of Theorem~\ref{thm:constructive}]
We first construct the RKHS $H=\Span\{\eta_1,\dots,\eta_N\}$ with kernel
\[
 K(x,y) := \sum_{k=1}^N k^{-2t} \eta_k(x) \overline{\eta_k(y)},\quad\text{where}\quad t=\frac{\alpha_0+\theta}{2}.
\]
This allows to apply Corollary~\ref{coro:noisy-recovery} to the function $f_{N}\in H$ and the space $V_m=\Span\{\eta_1,\dots,\eta_m\}$. More precisely, we take
\begin{equation}
\label{eq:a-b-constructive}
a=(\eta_k)_{1\leq k\leq m}\quad\text{and}\quad b=(k^{-t}\eta_k)_{m< k\leq N}.
\end{equation}
Then $\lambda_m=(m+1)^{-2t}$, $P_mf_{N}=f_{m}$ and
\[
\Tr(K_m)=\sum_{m<k\leq N} k^{-2t}\leq \lambda_m+\int_{m+1}^\infty \frac{dx}{x^{2t}}\leq \lambda_m\frac{m+2t}{2t-1},
\]
which is finite since $t>\theta\geq 1/2$. With the choice $n=2m$, we get
\[
\frac{1}{1-r}\leq 4\quad\text{and}\quad 1+\tilde s\leq 2\sqrt{\frac{2t}{2t-1}}\leq 2\sqrt{\frac{2\alpha_0}{\alpha_0-\theta}}.
\]
Therefore, Corollary~\ref{coro:noisy-recovery}, with measurements $y_i=f(x_i)=f_N(x_i)+e_i$, and the modification of the increment $\delta$ from Lemma~\ref{lem:accept_proba}, gives
\begin{equation} \label{eq:truncation}
\|f_{N}-\tilde f\|_2\leq 16\sqrt{\frac{2\alpha_0}{\alpha_0-\theta}}\,\Big(2\,m^{-t} \,\big\|f_N-f_m \big\|_H
+\,\Vert f-f_N \Vert_{\infty}\Big).
\end{equation}
We then control the two terms on the RHS by dyadic decompositions. We have
\begin{align*}
\|f_N-f_m\|_H^2
&\leq \sum_{k>m}k^{2t}\,|\hat f(k)|^2\\[-1em]
&= \sum_{\ell\geq 0}
\;\sum_{k=2^\ell m+1}^{2^{\ell+1}m}\;
k^{2t}\,|\hat f(k)|^2\\
&\leq \sum_{\ell\geq 0} \,(2^{\ell+1}m)^{2t}\,\Vert f - f_{2^\ell m} \Vert_2^2\\
&\leq C_f^2\,C_H^2\, m^{2t-2\alpha}\,\log^{2\beta}(m),
\end{align*}
where
$
C_H:= 2^{t+\beta}\Big(1+\sum_{\ell\geq 1}2^{\ell(2t-2\alpha)}\ell^{2\beta}\Big)^{1/2}
$
is finite since $\alpha>t$.

We control $\|f-f_N\|_\infty$ in the same way.
For all $x\in D$,
\begin{align*}
    \vert f(x) - f_{N} (x) \vert
    \,&\leq\, 
    \sum_{\ell\geq 0} \;\sum_{k = 2^\ell N+1}^{2^{\ell+1}N} \;\vert\hat f(k)\vert \, \vert \eta_k(x) \vert\\
    \,&\leq\,
    \sum_{\ell\geq 0} \|f-f_{2^\ell N}\|_2\, \Big(\sum_{k \leq 2^{\ell+1}N} \vert \eta_k(x) \vert^2\Big)^{1/2}\\
    \,&\leq\,
    C_f\,C_\infty N^{\theta-\alpha} \, \log^\beta N,
\end{align*}
where
$
C_\infty:=C_\eta 2^\theta\Big(1+\sum_{\ell\geq 1}2^{\ell(\theta-\alpha)}\ell^{\beta}\Big)
$
is finite since $\alpha>\theta$. By the choice of $N$, we have
\[
N^{\theta-\alpha}\leq m^{-\alpha}\quad\text{and}\quad \log^\beta(N)\leq 
\left(\frac{2\alpha_0}{\alpha_0-\theta}\right)^\beta 
\log^\beta(m),
\]
and therefore
\[
\|f-\tilde f\|_2\leq \|f-f_N\|_2+\|f_N-\tilde f\|_2\leq  C_f\,C\,m^{-\alpha}\log^\beta(m),
\]
where
\[
C=(1+32 C_H+16C_\infty)\left(\frac{2\alpha_0}{\alpha_0-\theta}\right)^{\beta+1/2}.\qedhere
\]
\end{proof}

\begin{remark}
\label{rem:uniform-decay}
    As can be seen from
    the proof, see~\eqref{eq:truncation}, the uniform bound~\eqref{eq:assum-uniform-bound} on the basis can be avoided and it may be possible to choose a smaller truncation index $N$ if there is other knowledge about $\Vert f - f_{N} \Vert_\infty$,
    possibly via smoothness assumptions.
    For example, if we fix parameters $t>1/2$, $p>1$ and truncate at $N = \lceil m^p \rceil$, a bound $\Vert f - f_m \Vert_2 \in O(m^{-\alpha})$ with $\alpha >t$ will result in a corresponding bound $\|f-\tilde f\|_2 \in O(m^{-\alpha})$ under the additional assumption that $\Vert f - f_{N} \Vert_\infty \in O(N^{-\alpha/p})$.
\end{remark}

 \begin{remark}
 Given a function $f$, the decay of the coefficients of $f$ ensures that its expansion in the basis $(\eta_k)$ converges almost everywhere. As the distribution of $(x_i)$ is absolutely continuous with respect to $\mu^{\otimes n}$, the assumption $f(x_i)=\lim_{\ell\to\infty}f_{I_\ell}(x_i)$ is almost surely true for each $f$.
 \end{remark}

\begin{example}
\label{ex:mixed-smoothness}
    An example, where Theorem~\ref{thm:constructive} could be applied
    is the approximation on Sobolev spaces $H^\alpha_{\rm mix}$
    of $d$-variate periodic functions with mixed smoothness $\alpha>1/2$.
    These can be defined by 
    \[
     H^\alpha_{\rm mix} =
     \left\{ f \in C([0,1]^d) \colon \Vert f \Vert_{\alpha}^2 := \sum_{ k \in \Z^d} w_{ k}^{2\alpha} |\hat{f}( k)|^2 < \infty \right\},
    \]
    where $\hat{f}( k)$ are the usual Fourier coefficients and $w_{ k} = \prod_{i=1}^d \max\{1,|k_i|\}$. 
    For integer $\alpha$, they can also be defined as spaces of functions possessing (weak) derivatives 
    in $L_2$ up to mixed order $\alpha$,
    see, e.g., \cite[Sec.\,2.1]{KSU} for a few details,
    and \cite{DTU} for a comprehensive treatment of dominating mixed smoothness.
    If we choose the functions $\eta_k$ as the trigonometric monomials
    with the frequencies $k \in \Z^d$ ordered according to their hyperbolic distance $w_{k}$ to the origin
    and $\alpha_0>1/2$,
    the algorithm from Theorem~\ref{thm:constructive} satisfies
    \begin{equation}\label{eq:order-mix}
     \Vert f - \tilde f \Vert_2 \,\lesssim\, m^{-\alpha} 
     (\log m)^{\alpha(d-1)} 
     \cdot \Vert f \Vert_{\alpha},
    \end{equation}
    for any smoothness $\alpha\geq \alpha_0$.
    
    We present this example since
    for the class $H^\alpha_{\rm mix}$,
    no (polynomial time) construction was previously known to give the optimal order~\eqref{eq:order-mix}.
    Only the existence of such algorithms was known due to~\cite{DKU}.
    Previous constructions that come close to the optimal order include sparse grids (Smolyak's algorithm), see~\cite{SU},
    random points~\cite{KU1},
    subsamples of random points~\cite{BSU}
    and generated sets (lattices with a non-integer generating vector), see~\cite{CNW}.
    \end{example}

\subsection{Fast computations}
\label{subsec:fast-comp}

If implemented naively, Algorithm~\ref{alg:abstract-construction} would have a cost $O(N^3)$ per iteration due to the inversion of $N\times N$ matrices when computing $U_B^\zeta(b(x_i))$ on line~\ref{state:test}. This cost becomes prohibitive for large values of $N$, which occur in Theorem~\ref{thm:constructive} when $\alpha$ is close to $\theta$, or when $p$ is large in Remark~\ref{rem:uniform-decay}.

However, it can be exploited that the matrices to be inverted after $i$ iterations are of the form
\[
W=(D-MM^*)^{-1}\quad\text{and}\quad X=(D'-MM^*)^{-1},
\]
where $D=B_0+i\zeta J$ and $D'=D+\zeta J$ are diagonal by orthogonality of the basis $b$, and
\[
M = [\sqrt{w_1} b_1|\dots|\sqrt{w_i} b_i] \in \C^{N\times i}
\]
is thin and tall since $i\leq n\ll N$ for the choice $n=2m$. By the Woodbury matrix identity, we get
\[
W=D^{-1}+D^{-1}M(I-M^*D^{-1}M)^{-1}M^*D^{-1},
\]
and similarly for $X$. The matrix $I-M^*D^{-1}M$ is now of more reasonable size $i\times i$, it can be assembled in time $O(Nn^2)$ and inverted in time $O(n^3)$.

In the end, we never explicitly construct matrix $B$. The computation of
\[
\Tr(JW)=\Tr(J D^{-1})+\Tr((I-M^*D^{-1}M)^{-1}\cdot M^*D^{-1}JD^{-1}M),
\]
as well as that of $\Tr(JX)$, takes $O(Nn^2)$ since $M^*D^{-1}JD^{-1}M$ can also be assembled in time $O(Nn^2)$. Moreover, once $I-M^*{D'}^{-1}M$ is known, evaluating $Xb$ for some vector $b\in \C^N$ takes $O(Nn)$ by successive matrix-vector products.
With this strategy, we are ready finish the proof of Theorem~\ref{thm:constructive}.

\begin{proof}[Proof of the computational complexity in Theorem~\ref{thm:constructive}]
At each iteration, we compute $Y$ and $Z$ in time $O(n^3)$, and the inverses of $I-M^*D^{-1}M$ and $I-M^*{D'}^{-1}M$ in $O(Nn^2)$, which allows to access $\Tr(JW-JX)$ at the same cost.

We then sample a point $x_i$ from the Christoffel measure $\rho$ at a cost $c$, evaluate $L_A^\delta(a(x_i))$ in $O(n^2)$ and $Xb(x_i)$ in $O(Nn)$, from which we evaluate $b(x_i)^*Xb(x_i)$, $(Xb(x_i))^*JXb(x_i)$, and hence of $U_B^\zeta(b(x_i))$, in $O(N)$.

In conclusion, recalling that $n\ll N$, the cost of all matrix inversions and trace estimations over the $n$ iterations is $O(Nn^3)$. Proposing and testing samples costs $c+O(Nn)$, and the total number of proposed points is described by a negative binomial distribution with $n$ successes and a success probability $\Omega(1/n)$ by Lemma~\ref{lem:accept_proba}. We conclude by bounding this random number in law by $O(n^2)\mathcal G$, where $\mathcal G\geq 1$ is a shifted geometric variable, satisfying $\mathbb P(\mathcal G=k)=2^{-k}$ for $k\in\N$.
\end{proof}

\section{Numerical results}
\label{subsec:numerics}

We now discuss some numerical examples
for the sampling points and weights 
from Theorem~\ref{thm:constructive}.
Figures~\ref{fig:trig-blocked}--\ref{fig:legendre-new} show the sampling points in $D=[0,1]^2$ 
for different choices of basis functions and weights.
In all cases, the points in the algorithm are suggested randomly and uniformly distributed, 
and the weight in line~\ref{state:choose-w} is chosen minimal. 
In addition, in each step, 
all previous points are tested again before new points are proposed. 
If one of the previous points is accepted, we only have to increase its weight,
thus reducing the size of the final point set.

For a given univariate orthonormal system $(\eta_k)_{k\in\N_0}$, 
we consider the tensor product bases 
\[
a=\bbrackets{\eta_j\otimes \eta_k\co\; j,k \in\N_0 
\;\text{ with } \;\sigma_{j,k} \leq R }
\]
and 
\[
b=\bbrackets{\frac{1}{\sigma_{j,k}}\cdot \eta_j\otimes \eta_k\co\; j,k\in \N_0
\;\text{ with } \; R<\sigma_{j,k} \leq R' 
\;\text{ or }\; (j,k)=(0,0)}.
\] 
Here, the weights describe either
\begin{itemize}
    \item isotropic smoothness, i.e., $\sigma_{j,k}=\max\{1,2\pi(j^2+k^2)\}$, or 
    \item mixed smoothness, i.e., $\sigma_{j,k}=\max\{1,2\pi|j|\}\cdot\max\{1,2\pi|k|\}$.
\end{itemize}
As for univariate basis functions, we consider 
\begin{itemize}
    \item \emph{trigonometric polynomials}: $\eta_k^T(x):=e^{2\pi{\rm i}k x}$, or
    \item \emph{Legendre polynomials}: 
    $\displaystyle \eta_k^L(x) := \sqrt{2k+1}\cdot \frac{(-1)^k}{k!} \frac{d^k}{dx^k} \left[\, x^k (1 - x)^k \,\right]$, or
    \item \emph{Chebyshev polynomials}: $\eta_0^C=1$ and 
    \[
    \eta^C_k(x) := \sqrt{2}\,\cos(k \arccos(2x-1)), \qquad k\ge1.
    \]
\end{itemize}
The first two are orthonormal 
w.r.t.~the Lebesgue measure, 
while the latter is orthonormal w.r.t.~the \emph{arcsine density} $\nu(x)=(\pi \sqrt{x(1-x)})^{-1}$.

For the trigonometric system, condition~\eqref{eq:assum-uniform-bound} is satisfied with $\theta = 1/2$. 
In this case, the Christoffel density gives the uniform distribution. 
Also for the Chebyshev system, condition~\eqref{eq:assum-uniform-bound} is satisfied with $\theta = 1/2$. 
For the Legendre system, we can choose $\theta=1$.
If we choose the truncation parameter $R'$ large enough 
(as can be computed via Theorem~\ref{thm:constructive}), 
for any previously chosen threshold $\alpha_0>\theta$,
the resulting least squares approximation hence gives 
optimal $L_2$-approximation rates
if the rate of best $L_2$-approximation in this basis
(ordered according to $\sigma_{j,k}$)
is at least $\alpha_0$.

We add that the true acceptance rate of the suggested points was a lot higher than the theoretical lower bounds from Section~\ref{subsec:Christoffel}; 
usually every second to tenth randomly generated point was accepted by the algorithm. 
For this reason, we also did not have to decrease the increment $\delta$ to $\delta_\eps$
as done in Section~\ref{subsec:Christoffel}
to increase the theoretical acceptance rate;
we used precisely the parameters as given in Algorithm~\ref{alg:abstract-construction}.

In all pictures, the size of the points is drawn proportional to the size of the corresponding weight. 
In addition to the points and weights,
we also plot the area of all points that would be rejected by the algorithm
in the following iteration.
That is, the blue area can be interpreted as the area 
that is `blocked' by the already chosen sampling points.
Two observations that we find interesting are:
\begin{itemize}
    \item The blocked area is roughly ball-shaped in the isotropic cases
    and roughly hyperbolically shaped in the hyperbolic case, see Figure~\ref{fig:trig-blocked}.
    This was not clear a priori, since the corresponding shape is a priori given only on the Fourier side,
    not on the sampling domain.
    Towards the end of the construction,
    the area of acceptable points takes roughly this shape, see Figure~\ref{fig:trig-final}.
    This fits to the knowledge that, in the isotropic case,
    the error is dominated by the size of the largest empty ball
    (see, e.g., \cite{KS1}).
    In the hyperbolic case, there is no theoretical result undermining this observation.
    
\begin{minipage}{.95\linewidth}
\begin{figure}[H]
\centering
\includegraphics[clip, width=.5\linewidth, trim=30mm 20mm 20mm 0mm]{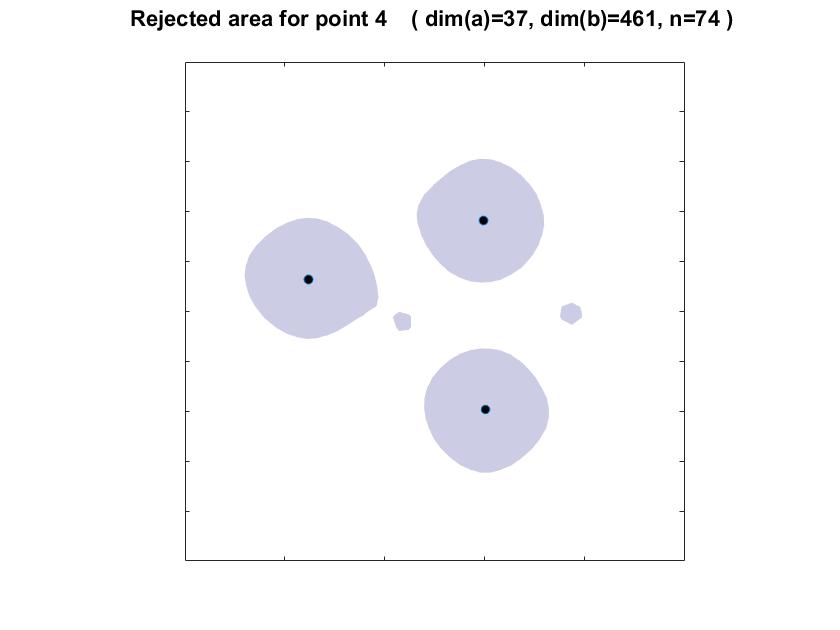}
\hspace{-5mm} 
\includegraphics[clip, width=.5\linewidth, trim=30mm 20mm 20mm 0mm]{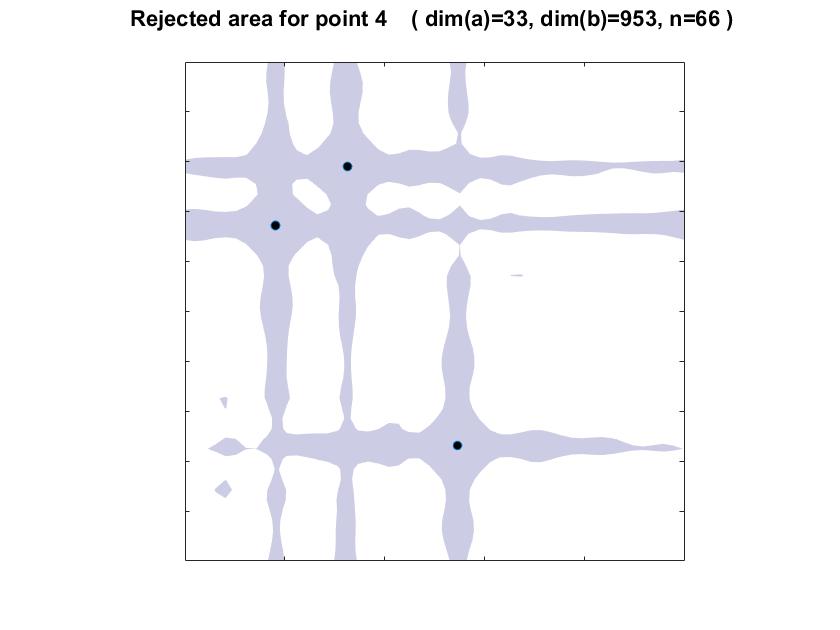} 
\caption{Points rejected by the algorithm (grey area) 
for the Fourier basis with isotropic (left)  and mixed (right) smoothness.}
\label{fig:trig-blocked}
\end{figure}
\end{minipage}

\begin{minipage}{.95\linewidth}
\begin{figure}[H]
\centering 

\includegraphics[clip, width=.5\linewidth, trim=30mm 20mm 20mm 0mm]{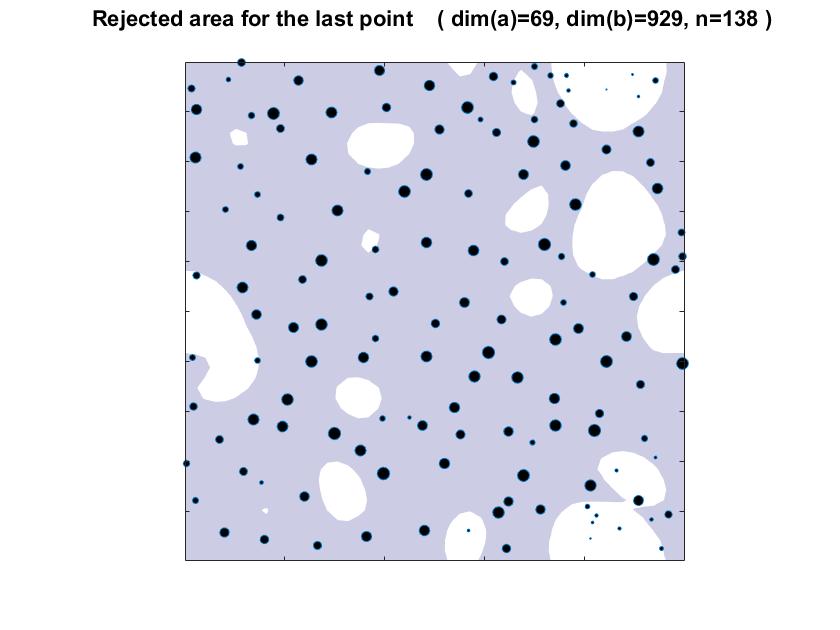}
\hspace{-3mm} 
\includegraphics[clip, width=.5\linewidth, trim=30mm 20mm 20mm 0mm]{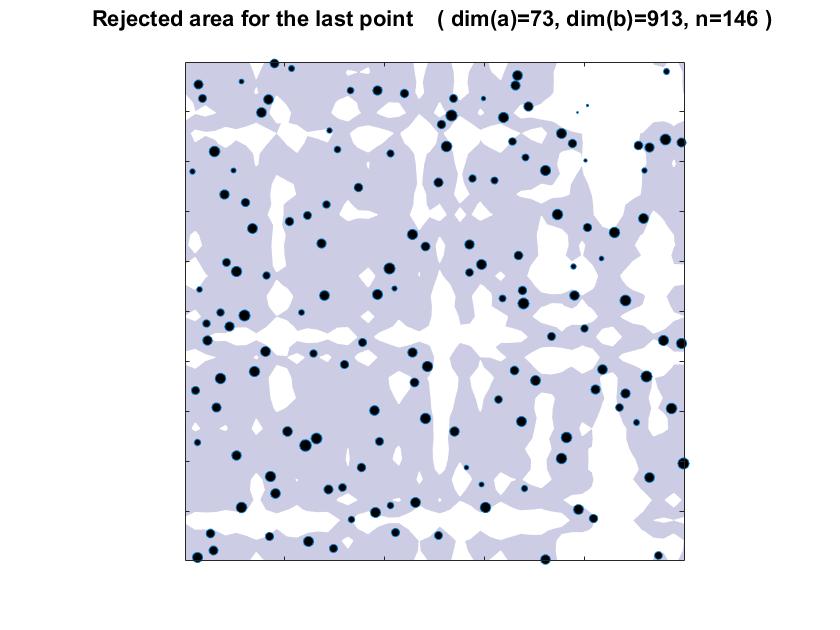} 
\hspace{-5mm}
\caption{Points generated according to the algorithm for the Fourier basis with isotropic (left)  and mixed (right) smoothness, with the area of points that would be rejected next (grey).}
\label{fig:trig-final}
\end{figure}
\medskip

\end{minipage}
    
    \item As one would expect and hope,
    for the trigonometric system, the points are evenly spread over the domain.
    The points for the Legendre and Chebyshev systems have a much higher concentration towards the boundary. The weights of the points at the boundary, however, are chosen larger for Chebychev polynomials, but smaller for Legendre polynomials, 
    see Figures~\ref{fig:cheb-new} and~\ref{fig:legendre-new}.

\begin{minipage}{.95\linewidth}
\begin{figure}[H]
\includegraphics[clip, width=.5\linewidth, trim=30mm 20mm 20mm 0mm]{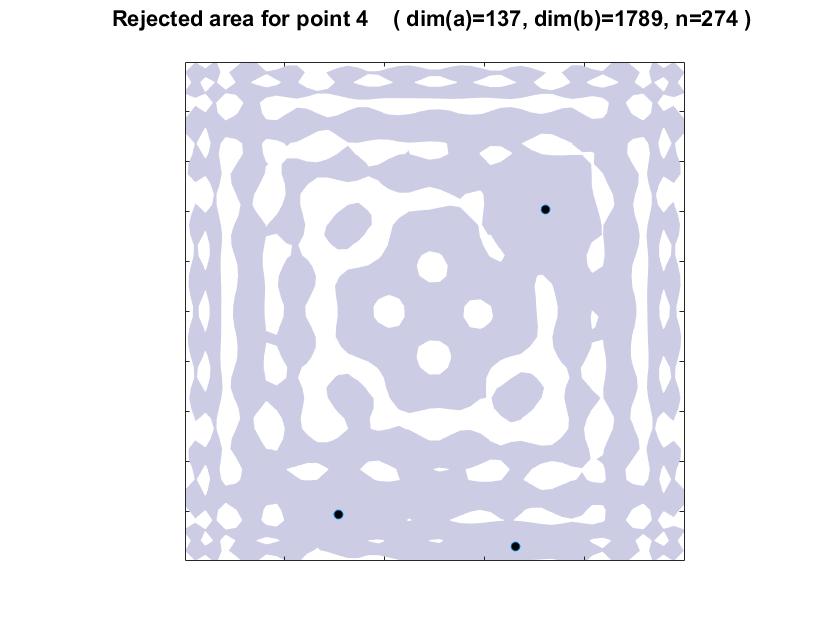} 
\includegraphics[clip, width=.5\linewidth, trim=30mm 20mm 20mm 0mm]{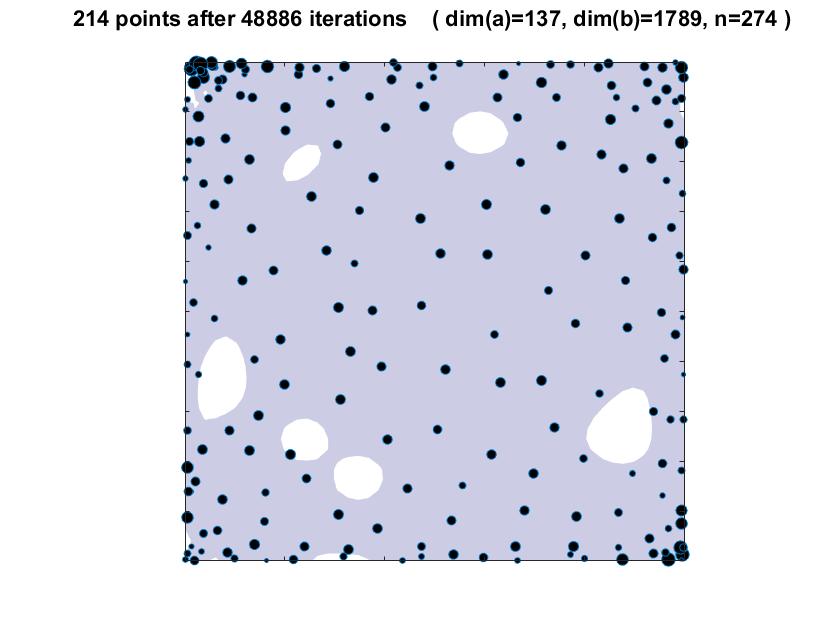} 
\hspace{-5mm}
\vspace{-3mm}
\caption{Points generated according to the algorithm for the Chebyshev basis for isotropic smoothness 
($R=1000$, $R'=15000$), with the area of points that would be rejected next (grey).}
\label{fig:cheb-new}
\end{figure}
\end{minipage}

\begin{minipage}{.95\linewidth}
\begin{figure}[H]
\includegraphics[clip, width=.5\linewidth, trim=30mm 20mm 20mm 0mm]{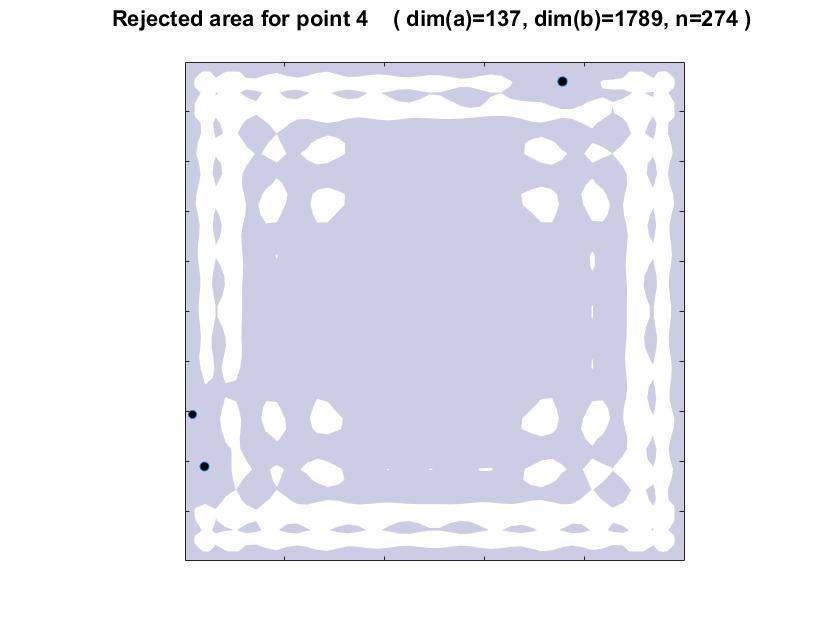} 
\includegraphics[clip, width=.5\linewidth, trim=30mm 20mm 20mm 0mm]{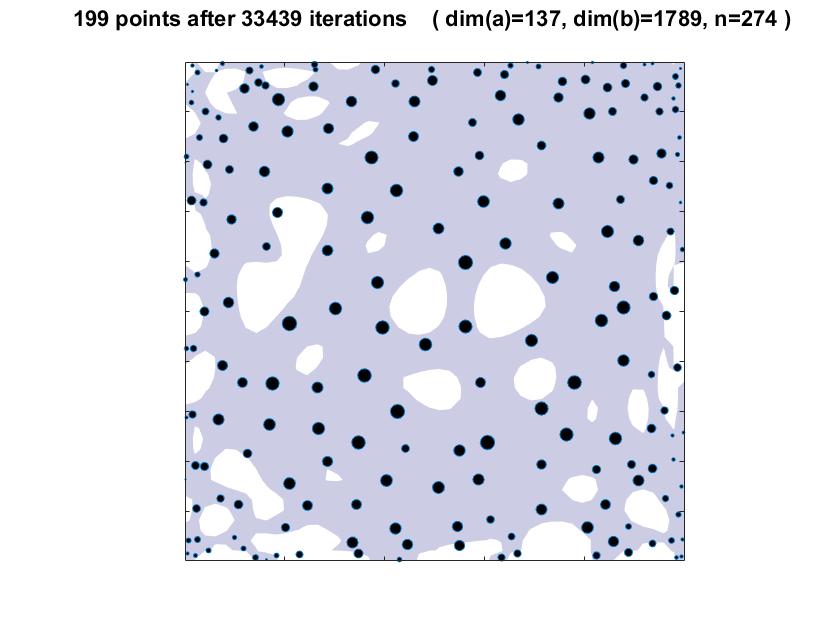} 
\hspace{-5mm}
\vspace{-3mm}
\caption{Points generated according to the algorithm for the Legendre basis for isotropic smoothness 
($R=1000$, $R'=15000$), with the area of points that would be rejected next (grey).}
\label{fig:legendre-new}
\end{figure}
\end{minipage}

\end{itemize}


\bigskip

\noindent
A.C., Mohammed VI Polytechnic University; \texttt{abdellah.chkifa@um6p.ma};\\
M.D., Brown University; \texttt{matthieu\_dolbeault@brown.edu};\\
D.K., University of Passau; \texttt{david.krieg@uni-passau.de};\\
M.U., Johannes Kepler University Linz; \texttt{mario.ullrich@jku.at}

\end{document}